\def\R{{\mathbb R}}
\def\C{{\mathbb C}}
\def\N{{\mathbb N}}
\def\Z{{\mathbb Z}}
\def\<{\langle}
\def\>{\rangle}
\def\P{\mathbb P}
\def\E{\mathbb E}
\def\0{\underline 0}
\def\1{\underline 1}
\newcommand{\bel}{\begin{equation}\label}
\newcommand{\nobel}{\begin{equation}}
\newcommand{\ee}{\end{equation}}
      \newtheorem{theorem}{Theorem}[section]
       \newtheorem{proposition}[theorem]{Proposition}
       \newtheorem{corollary}[theorem]{Corollary}
       \newtheorem{remark}{Remark}[section]
\theoremstyle{definition}
\newtheorem{definition}{Definition}[section]
\title{Independence preserving property of Kummer laws}
\author{Angelo Efo\'evi Koudou and Jacek Weso\l owski}
\address{}
\date{\today}
\begin{document}

\begin{abstract}
We prove that if $X,Y$ are positive, independent, non-Dirac random variables and if for $\alpha,\beta\ge 0$, $\alpha\neq \beta$,  
$$
\psi_{\alpha,\beta}(x,y)=\left(y\,\tfrac{1+\beta(x+y)}{1+\alpha x+\beta y},\;x\,\tfrac{1+\alpha(x+y)}{1+\alpha x+\beta y}\right),
$$ then the random variables $U$ and $V$ defined by $(U,V)=\psi_{\alpha,\beta}(X,Y)$ 
are independent if and only if $X$ and $Y$ follow Kummer distributions with suitably related parameters. In other words,  any invariant measure for a lattice recursion model governed by $\psi_{\alpha,\beta}$ in the scheme introduced by Croydon and Sasada in \cite{CS2020}  is necessarily a product measure with Kummer marginals. The result extends earlier characterizations of Kummer and gamma laws by independence of
$$
U=\tfrac{Y}{1+X}\quad\mbox{and}\quad V= X\left(1+\tfrac{Y}{1+X}\right),
$$
which corresponds to the case of $\psi_{1,0}$.

We also show that this independence property of Kummer laws covers, as limiting cases, several independence models known in the literature: the Lukacs, the Kummer-Gamma, the Matsumoto-Yor and the discrete Korteweg de Vries models. \end{abstract}

\maketitle

\section{Introduction}

Consider, for  $b,c>0$, the gamma distribution $\mathrm{Gamma}(b,c)$ with density proportional to
$$y^{b-1}e^{-cy}I_{(0,\infty)}(y)
$$
and for $p \in \R, a > 0, b > 0$, the generalized inverse Gaussian (GIG) distribution GIG$(p,a,b)$ with density proportional to
		$$x^{p-1}e^{-ax-b/x} I_{(0,\infty)}(x).$$
Following \cite{CS2020}, we say that a  quadruplet of probability measures $(\mu, \nu, \tilde{\mu}, \tilde{\nu})$ on $\mathcal{U}, \mathcal{V}, \tilde{\mathcal{U}}, \tilde{\mathcal{V}}$, respectively, 
satisfy the {\it detailed balance equation} for a map $F: \mathcal{U} \times \mathcal{V} \rightarrow  \tilde{\mathcal{U}} \times \tilde{\mathcal{V}}$ if 
$$F(\mu \otimes \nu)=\tilde{\mu} \otimes \tilde{\nu},$$
where $F(\mu \otimes \nu)$ means $(\mu \otimes \nu) \circ F^{-1}$.

The Matsumoto-Yor property is the following: for $p, a , b >0$,  given two independent, positive random variables $X$ and $Y$ such that $X \sim {\rm GIG}(-p, a,b)$ and $Y \sim \mathrm{Gamma}(p,a)$,  the random variables $\frac{1}{X+Y}$ and $\frac{1}{X}-\frac{1}{X+Y}$ are independent (and follow the ${\rm GIG}(-p, b,a)$ and $\mathrm{Gamma}(p,b)$, respectively). Using the terminology of \cite{CS2020}, the Matsumoto-Yor property says that 
the quadruplet of probability measures $\mu={\rm GIG}(-p, a,b),\,\nu= \mathrm{Gamma}(p,a),\, \tilde{\mu}= {\rm GIG}(-p, b,a),\,\tilde{\nu}=  \mathrm{Gamma}(p,b)$ satisfy the {\it detailed balance equation} for the map 
$$F: \,   (0,\infty )^2 \rightarrow  (0,\infty )^2 $$
$$(x,y)\stackrel{F}{\mapsto} \left(\frac{1}{x+y}, \frac{1}{x}-\frac{1}{x+y}\right).$$
This property was discovered in  \cite{MY01} in the case $a=b$, while studying some functionals of exponential Brownian motion. In \cite{LW00} it was noticed that it   holds also when $a\neq b$  and it was proved that this independence property is in fact a characterization: for  two non-Dirac, positive and independent random variables $X$ and $Y$,
the random variables  $\frac{1}{X+Y}$ and $\frac{1}{X}-\frac{1}{X+Y}$ are independent if and only if
 $X \sim {\rm GIG}(-p, a,b)$ while  $Y \sim \mathrm{Gamma}(p,a)$ for some  $p, a,b >0$.

In \cite{KV12} the authors studied the question of finding decreasing and bijective functions $f:  (0,\infty ) \rightarrow (0,\infty )$ such that there exists a  quadruplet of probability measures $(\mu, \nu, \tilde{\mu}, \tilde{\nu})$ on  $(0,\infty )$ satisfying the  detailed balance equation for the map  
$$T_f: \,   (0,\infty )^2 \rightarrow  (0,\infty )^2 $$
$$\label{transformation} (x,y) \mapsto  (f(x+y), \ f(x)-f(x+y)).$$  This led, at the cost of some regularity assumptions, to other independence properties of Matsumoto-Yor type 
(of course, one retrieves the original Matsumoto-Yor case for $f(x)=1/x$), amongst which 
was a property involving the Kummer distribution: for $a,c>0$ and $b\in\R$, the Kummer distribution $\mathcal K(a,b,c)$ is defined through the density proportional to
$$ \tfrac{x^{a-1}e^{-cx}}{(1+x)^b}\,I_{(0,\infty)}(x).
$$ 
More precisely, it was proved in \cite{KV12} that if $X$ and $Y$ are independent Kummer and gamma random variables with suitably related parameters then
$$
U=X+Y\quad \mbox{and}\quad V=\tfrac{1+(X+Y)^{-1}}{1+X^{-1}}
$$ 
are independent Kummer and beta random variables. This was the starting point of a number of works on Matsumoto-Yor type characterizations of the Kummer distribution. Firstly, 
starting from the latter property and looking for an involutive version of it, i.e. trying to find an involutive map $F: \,(X,Y)\mapsto (U,V)$ for which the Kummer distribution is involved in a detailed balance equation, the following interesting property was discovered in 
\cite{HV16}:
let $X$ and $Y$ be independent, $X$ have the distribution $\mathcal{K}(a,b,c)$ 
and $Y$ have the gamma distribution $\mathrm{Gamma}(b,c)$,
 then 
\bel{def1}U=\tfrac{Y}{1+X}
\qquad \mbox{and}\qquad V=X\,\tfrac{1+X+Y}{1+X} \ee
  are independent, $U\sim \mathcal{K}(b,a,c)$ and $V\sim\mathrm{Gamma}(a,c)$. 

In  \cite{PW18} this independence property was proved to give a characterization result with no assumption of existence of densities.
Related characterizations were considered in \cite{Wes15} and \cite{PW16}. In \cite{KP20} an extension to  the matrix-variate case was established, while in \cite{Pil21} a free probability version of the property and characterization was given. The latter needed a definition of a new distribution, a free analogue of the Kummer distribution. 

In the past ten years there has been a revival of interest in transformations preserving independence properties, triggered mostly by invariance properties of random iterations schemes or random walks in random environment. In particular, the log-gamma random polymer introduced  in \cite{Sep12} relies on the Lukacs independence property for the Gamma distributions (see \cite{L55}), while the beta polymer introduced in \cite{BC17} refers to an independence property for beta distributions, called neutrality. This property can be traced back to  the characterization of the bivariate Dirichlet distribution of \cite{DR71}  (see also \cite{SW03}). For a complete description of all stationary polymers on the lattice $\mathbb Z^2_+$ see \cite{CN18}, where also many further references can be found.

Let us come back to the definition of the detailed balance equation and recall its context as described in \cite{CS2020},
which considers models assuming the following dynamics: for $(n,t)$ in $\Z ^2$, $n$ is the spatial coordinate and $t$  the 
temporal one.  For fixed $t\in \Z$, $(x_n^t)_{n\in \Z } \in (0,\infty)^\Z $ is the configuration of the system at time $t$, and $(y_n^t)_{n\in \Z } \in (0,\infty)^\Z$ a collection of auxiliary variables through which the dynamics from $t$ to $t+1$ are defined. Namely, $(x_n^t, y_n^t)$ depends  on $(x_n^{t-1}, y_{n-1}^t)$ only, 
\begin{equation}\label{Zrec}
	(x_n^t, y_n^t)=G(x_n^{t-1}, y_{n-1}^t),
\end{equation} 
where for a bijection $F:\mathcal X\times\mathcal Y\to\tilde{\mathcal X}\times\tilde{\mathcal Y}$ either $G=F$, when $n+t$ is even or $G=F^{-1}$ when $n+t$ is odd. The case when $F$ is involutive is referred to as type I model, while the general case is referred to as type II model. Let $x=(x_n)_{n\in\Z}$ be such that the above recursion with the initial condition $x^0_n=x_n$, $n\in \Z$, has a unique solution $(x_n^t(x),y_n^t(x))_{n,t\in\Z}$. Let $\mathcal X^*$ denote the set of all such $x$'s. According to Theorem 1.1 in \cite{CS2020}, for a type I model, a sequence of iid random variables $X=(X_n)_{n\in \Z}$ with $X_1\sim \mu$ satisfies $X\stackrel{d}{=}\left(x^1_n(X)\right)_{n\in\Z}$ iff there exists a probability measure $\nu$ such that the pair $(\mu,\nu)$ satisfies the detailed balance condition with respect to $F$. That is, $\mu\otimes\nu$ is the invariant measure for the recursion \eqref{Zrec}. In case of the type II model similar alternating invariance holds for pairs $\mu\otimes\nu$ and $\tilde{\mu}\otimes\tilde{\nu}$ depending on parity of $n+t$ in \eqref{Zrec}. In \cite{CS2020} the authors identified four such type I and/or type II models.
\begin{enumerate}
	\item Ultra-discrete KdV (Korteweg-de Vries): type I model with 
	$$
	F(x,y):=F_{udK}^{(J,K)}=(y-(x+y-J)_++(x+y-K)_+,\;x-(x+y-K)_++(x+y-J)_+)
	$$ with $\mu$ and $\nu$ the shifted truncated exponential or shifted scaled truncated geometric laws.
	\item  Discrete KdV: type I model with
	$$
	F(x,y):=F_{dK}^{(\alpha,\beta)}(x,y)=\left(\tfrac{y(1+\beta xy)}{1+\alpha xy},\,\tfrac{x(1+\alpha xy)}{1+\beta xy}\right)
	$$ with $\mu$ the GIG law and $\nu$ the GIG (gamma) law which, when $\alpha\beta=0$, has a direct connection with the Matsumoto-Yor property and related characterization of the GIG and gamma laws.
	\item  Ultra-discrete Toda: type II model with
	$$
	F(x,y):=F_{udT^*}(x\wedge y,\,x-y)
	$$  with $\mu$, $\nu$, $\tilde{\mu}$ the shifted exponential, $\tilde{\nu}$ asymmetric Laplace or $\mu$, $\nu$, $\tilde{\mu}$ shifted scaled geometric,  $\tilde{\nu}$ scaled discrete Laplace laws; this one is related to classical characterizations of the exponential and geometric distributions from \cite{Fer64} and \cite{Cra66}.
	\item  Discrete Toda: type II model with
	$$
	F(x,y):=F_{dT^*}(x,y)=\left(x+y,\tfrac{x}{x+y}\right)
	$$
	with $\mu$, $\nu$, $\tilde{\mu}$ the gamma, $\tilde{\nu}$ the beta laws having a direct connection with the characterization of the gamma distribution given in \cite{L55}.
\end{enumerate}
For relations to box-ball systems and  Pitman's transform one can consult \cite{CST22} and \cite{CKST23}. Also recently a  mysterious connection between  independence properties and Yang-Baxter equations holding for related transformations was discovered in \cite{SU22}.

In the context of the discrete KdV model in 
 \cite{CS2020} the authors observed that if  $X$ and $Y$ are independent,  $U$ and $V$ are independent and all four have GIG distributions with suitable parameters, then $(X,Y)$ and $(U,V)$ satisfy the detailed balance equation for the map  $F_{dK}^{(\alpha, \beta)}$. They also conjectured that the GIG distributions are the only possible ones that satisfy the $F_{dK}^{(\alpha, \beta)}$-detailed balance equation. Recently, in \cite{LW22} this conjecture was proved without the assumptions of existence and regularity of densities made in \cite{BN21} in proving a version of the same conjecture. More precisely, \cite{LW22}  established the following extension of the  Matsumoto-Yor property: if $A$ and $B$ are non-degenerate, positive and independent random variables, and if $\alpha$ and $\beta$ are two positive and distinct numbers, then the random variables 
$$S=\tfrac{1}{B} \tfrac{\beta A + B}{\alpha A + B}, \ \ \ T=\tfrac{1}{A} \tfrac{\beta A + B}{\alpha A + B}$$
are independent if and only if $A$ and $B$ have GIG distributions with suitable parameters. 

In this paper we reveal one more candidate for an invariant measure for a lattice recursion model. We derive the detailed balance equation for the Kummer distributions.  
Specifically, our main result (Theorem \ref{gHVcharacterization}) gives a characterization of the Kummer laws, which is a result of a  similar nature as the one in  \cite{LW22} for the GIG laws, i.e. it says that the Kummer distributions are the only possible ones which let the detailed balance equation be satisfied for the map
\bel{eff} F(x,y)=\left(y\,\tfrac{1+\beta(x+y)}{1+\alpha x+\beta y}, \, x\,\tfrac{1+\alpha(x+y)}{1+\alpha x+\beta y} \right).\ee 
The proof uses a suitably designed "Laplace-type" transform and leads to a special second order linear differential equation for an unknown function of such form. In this sense the general methodology (a Laplace type transform and a second order ordinary linear differential equation) resembles  that of the proof from  \cite{LW22}. However, at the technical level, the challenges to overcome were of quite a different nature.  
Interpreting this result in the context of the lattice system of recursions described above, it says that the Kummer distributions are the only relevant invariant measures for
the type I model governed by the $F$ defined in \eqref{eff}.

The paper is organized as follows: in Section 2 we introduce a scaled version of the Kummer distribution, we express and prove the considered independence property
in terms of that scaled Kummer distribution (Theorem \ref{gHV}).  
We show in Section 3 that some limit versions of Theorem \ref{gHV} cover several well-known independence preserving transformations.
Indeed, relying on a version of Theorem 5.5 from \cite{Bil68}, we  prove that Theorem 2.1 yields, as limiting cases,  the Lukacs property, \cite{L55}, the Kummer-Gamma property, \cite{KV12}, the Matsumoto-Yor property, \cite{MY01, LW00} and the KdV property, \cite{CS2020}. 
In Section 4 we define and analyze the {\it Kummer transform}, an extended Laplace transform that will help us to prove the chacterization theorem 
formulated at the end of this section. Section 5 contains the proof of the characterization split in several steps (subsections), amongst which is the crucial observation that the unknown {\em Kummer transform}  satisfies the Kummer differential equation (see \cite{AbraSt}).

\section{The independence property}

For the purpose of this paper it will be convenient to introduce a scaled version of the Kummer distribution.
\begin{definition}
	Let $\mathcal K_\alpha(a,b,c)$ for $\alpha\ge 0$, $a,c>0$ and $b\in\R$ be the probability distribution defined by the density
	$$
	f(x)\propto \tfrac{x^{a-1}e^{-cx}}{(1+\alpha x)^b}\,I_{(0,\infty)}(x)
	$$
\end{definition}

\begin{theorem}
	\label{gHV}
	Assume that $$(X,Y)\sim\mathcal K_{\alpha}(a,b,c)\otimes \mathcal K_{\beta}(b,a,c)$$ for $a,b,c>0$ and $\alpha,\beta\ge 0$, $\alpha\neq \beta$. Let 
	\begin{equation}\label{UV}
	U=Y\,\tfrac{1+\beta(X+Y)}{1+\alpha X+\beta Y}\quad\mbox{and}\quad V=X\,\tfrac{1+\alpha(X+Y)}{1+\alpha X+\beta Y}.
	\end{equation}
	Then
	$$
	(U,V)\sim \mathcal K_{\alpha}(b,a,c)\otimes \mathcal K_{\beta}(a,b,c).
	$$
\end{theorem}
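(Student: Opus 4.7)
The plan is a direct change of variables. Since $(U,V)=\psi_{\alpha,\beta}(X,Y)$ is given explicitly and both the hypothesized joint density of $(X,Y)$ and the claimed joint density of $(U,V)$ are in closed form, the proof reduces to computing $f_{U,V}$ via the Jacobian formula and verifying that the result factors as a product of two scaled Kummer densities.

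The key preliminary step is to record the algebraic identities that carry the whole calculation. Writing $D=1+\alpha x+\beta y$ and $s=x+y$, a short manipulation of \eqref{UV} yields
\begin{equation*}
u+v=s,\qquad 1+\alpha u=\tfrac{(1+\beta y)(1+\alpha s)}{D},\qquad 1+\beta v=\tfrac{(1+\alpha x)(1+\beta s)}{D},
\end{equation*}
and hence $(1+\alpha u)(1+\beta v)=(1+\alpha x)(1+\beta y)(1+\alpha s)(1+\beta s)/D^2$. The Jacobian is cleanest to compute by factoring $(x,y)\to(s,y)\to(u,v)$: the first step has unit Jacobian, and since $v=s-u$ the determinant of the second reduces to $|\partial u/\partial y|_s|=(1+\alpha s)(1+\beta s)/D^2$, obtained by differentiating $u=y(1+\beta s)/[1+\alpha s+(\beta-\alpha)y]$ at fixed $s$. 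Therefore the Jacobian of the inverse map $(u,v)\mapsto(x,y)$ equals
\begin{equation*}
\tfrac{D^2}{(1+\alpha s)(1+\beta s)}=\tfrac{(1+\alpha x)(1+\beta y)}{(1+\alpha u)(1+\beta v)}.
\end{equation*}

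Inserting into $f_{X,Y}(x,y)\propto x^{a-1}y^{b-1}e^{-cs}(1+\alpha x)^{-b}(1+\beta y)^{-a}$ and multiplying by this Jacobian, a direct balancing of the powers of $D$, $1+\alpha s$, $1+\beta s$, $1+\alpha x$, $1+\beta y$ via the identities above yields
\begin{equation*}
f_{U,V}(u,v)\propto u^{b-1}(1+\alpha u)^{-a}e^{-cu}\cdot v^{a-1}(1+\beta v)^{-b}e^{-cv},
\end{equation*}
which is precisely the claimed product of a $\mathcal K_\alpha(b,a,c)$ and a $\mathcal K_\beta(a,b,c)$ density; normalizing constants match automatically. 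The main ``obstacle'' is purely the bookkeeping of exponents in this last substitution---there is no conceptual difficulty---and the real content of the result lies in the striking factorizations of $1+\alpha u$ and $1+\beta v$ as products over $D$, $(1+\alpha s)$, $(1+\beta s)$, $(1+\alpha x)$, $(1+\beta y)$ that make every cancellation go through (and incidentally exhibit $\psi_{\alpha,\beta}$ as a diffeomorphism of $(0,\infty)^2$, justifying the change of variables).
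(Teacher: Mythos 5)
Your proposal is correct and follows essentially the same route as the paper: a direct change of variables, with your factorizations $1+\alpha u=(1+\beta y)(1+\alpha s)/D$ and $1+\beta v=(1+\alpha x)(1+\beta s)/D$ being exactly equivalent to the paper's key identities $x/(1+\beta y)=v/(1+\alpha u)$ and $y/(1+\alpha x)=u/(1+\beta v)$, and your Jacobian $D^2/\bigl((1+\alpha s)(1+\beta s)\bigr)$ agreeing with the paper's $xy/(uv)$. The only cosmetic difference is that the paper justifies invertibility by noting $\psi$ is an involution, which is slightly cleaner than appealing to the nonvanishing Jacobian.
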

\begin{remark}\label{exten}
	Note that  $\mathcal K_0(a,b,c)=\mathrm{Gamma}(a,c)$ and for $\alpha>0$ and $X\sim \mathcal K_{\alpha}(a,b,c)$ we have $\alpha X\sim \mathcal K(a,b,c/\alpha)$.
Consequently, by taking $(\alpha,\beta)=(1,0)$, we see that Theorem \ref{gHV} is a straightforward extension of the independence property observed in \cite{HV16} and recalled in the introduction, see \eqref{def1}.
\end{remark}
\begin{proof}[Proof of Theorem \ref{gHV}]
Denote 
$$
\psi(x,y)=\left(y\,\tfrac{1+\beta(x+y)}{1+\alpha x+\beta y},\,x\,\tfrac{1+\alpha(x+y)}{1+\alpha x+\beta y}\right)=:(u,v),\qquad x,y>0.
$$	
Note that $\psi:(0,\infty)^2\to(0,\infty)^2$ is an involution. Moreover,  the following identities hold true:
\begin{align}
\label{sum} & x+y=u+v, \\
\label{ratxv} & \tfrac{x}{1+\beta y}=\tfrac{v}{1+\alpha u},\\
\label{ratyu} & \tfrac{y}{1+\alpha x}=\tfrac{u}{1+\beta v}.
\end{align}
	
Using \eqref{sum}, \eqref{ratxv} and \eqref{ratyu} we compute the Jacobian $J_{\psi^{-1}}(u,v)$ of $\psi^{-1}=\psi$ as follows
\bel{jac}
J_{\psi^{-1}}(u,v)=\left|\tfrac{\partial(x,y)}{\partial(u,v)}\right|=\left|\begin{array}{cc}\tfrac{\partial x}{\partial u} & \tfrac{\partial x}{\partial v}\\ 
1-\tfrac{\partial x}{\partial u} & 1-\tfrac{\partial x}{\partial v}\end{array}\right|=\tfrac{\partial x}{\partial u}-\tfrac{\partial x}{\partial v}=\tfrac{1+\alpha x+\beta y}{1+\alpha u+\beta v}=\tfrac{xy}{uv}.
\ee


Now we are ready to find the joint density of $(U,V)$.  We have
\begin{align*}
f_{(U,V)}(u,v)&=\left|J_{\psi^{-1}}(u,v)\right|\,f_X(x(u,v))\,f_Y(y(u,v))\propto 
\tfrac{xy}{uv}\,\tfrac{x^{a-1}}{(1+\alpha x)^b}e^{-cx}\,\tfrac{y^{b-1}}{(1+\beta y)^a}e^{-cy}I_{(0,\infty)^2}(u,v)\\
&=\tfrac{1}{uv}\,\left(\tfrac{x}{1+\beta y}\right)^a\,\left(\tfrac{y}{1+\alpha x}\right)^b\,e^{-c(x+y)}\,I_{(0,\infty)^2}(u,v).
\end{align*}
Again referring to \eqref{sum}, \eqref{ratxv} and \eqref{ratyu} we get
$$
f_{(U,V)}(u,v)\propto \tfrac{1}{uv}\left(\tfrac{v}{1+\alpha u}\right)^a\,\left(\tfrac{u}{1+\beta v}\right)^b\,e^{-c(u+v)}\,I_{(0,\infty)^2}(u,v)
$$
and the result follows.
\end{proof}

Theorem \ref{gHV} yields the following independence property for pure (i.e. $\alpha=\beta=1$) Kummer variables.
	
	\begin{corollary}
		\label{sK}
		Let $(\tilde X,\tilde Y)\sim\,\mathcal K(a,b,c)\otimes\mathcal K(b,a,\gamma c)$ for $1\neq \gamma>0$. Then
		$$
		(\tilde U,\tilde V):=\left(\tilde Y\,\tfrac{\tilde X+\gamma(1+\tilde Y)}{1+\tilde X+\tilde Y},\;\tilde X\tfrac{\tilde Y+\gamma^{-1}(1+\tilde X)}{1+\tilde X+\tilde Y}\right)\sim \mathcal K(b,a,c)\otimes\mathcal K(a,b,\gamma c).
		$$
	\end{corollary}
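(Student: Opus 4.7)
The plan is to derive Corollary \ref{sK} directly from Theorem \ref{gHV} by a rescaling argument, using the identity recorded in Remark \ref{exten}: if $W \sim \mathcal K_\alpha(a,b,c)$ with $\alpha > 0$, then $\alpha W \sim \mathcal K(a,b,c/\alpha)$, so equivalently $\tilde W / \alpha \sim \mathcal K_\alpha(a,b,\alpha c)$ whenever $\tilde W \sim \mathcal K(a,b,c)$.

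First, I would choose the scaling parameters $\alpha = \gamma$ and $\beta = 1$. Since $\gamma \neq 1$, these satisfy the hypothesis $\alpha \neq \beta$ of Theorem \ref{gHV}. Then I would set $X = \tilde X / \gamma$ and $Y = \tilde Y$. Applying the scaling identity from Remark \ref{exten} to each coordinate, one verifies that
$$(X,Y)\sim \mathcal K_\gamma(a,b,\gamma c)\otimes \mathcal K_1(b,a,\gamma c),$$
which is precisely the hypothesis of Theorem \ref{gHV} with the parameter $c$ there replaced by $\gamma c$. (The matching of the second coordinate uses $\beta = 1$ and $\tilde Y \sim \mathcal K(b,a,\gamma c)$ directly.)

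Next, I would apply Theorem \ref{gHV} to conclude that, with $(U,V) = \psi_{\gamma,1}(X,Y)$ as in \eqref{UV},
$$(U,V)\sim \mathcal K_\gamma(b,a,\gamma c)\otimes\mathcal K_1(a,b,\gamma c),$$
which, by the scaling identity applied in reverse, yields $\gamma U \sim \mathcal K(b,a,c)$ and $V \sim \mathcal K(a,b,\gamma c)$ independently. To close the argument, I would substitute $X = \tilde X/\gamma$, $Y = \tilde Y$ into \eqref{UV} with $\alpha=\gamma$, $\beta=1$, and simplify by multiplying numerator and denominator by $\gamma$: this gives
$$\gamma U = \tilde Y\,\tfrac{\tilde X+\gamma(1+\tilde Y)}{1+\tilde X+\tilde Y},\qquad V = \tilde X\,\tfrac{\tilde Y+\gamma^{-1}(1+\tilde X)}{1+\tilde X+\tilde Y},$$
matching exactly the definitions of $\tilde U$ and $\tilde V$ in the statement.

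There is no serious obstacle here: the only step requiring care is the bookkeeping that $c' := \gamma c$ is the same on both marginals after rescaling (this is what forces $\alpha/\beta = \gamma$, and the choice $\beta=1$ is merely a convenient normalization), and the routine simplification showing that $(\gamma U, V)$ coincides with $(\tilde U,\tilde V)$ after clearing the factors of $\gamma$ in \eqref{UV}. An alternative symmetric choice $\alpha = \sqrt\gamma$, $\beta = 1/\sqrt\gamma$ would work identically; the proposal above keeps the algebra slightly cleaner.
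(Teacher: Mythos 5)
Your proposal is correct and follows essentially the same route as the paper: a rescaling of the variables via the identity of Remark \ref{exten} reducing the statement to Theorem \ref{gHV} with $\alpha/\beta=\gamma$ (the paper keeps $\alpha,\beta$ general while you normalize $\beta=1$, which is immaterial). Your version simply spells out the distributional bookkeeping and the algebraic identification of $(\gamma U,V)$ with $(\tilde U,\tilde V)$ that the paper leaves to the reader.
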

\begin{proof} For $\alpha,\beta\neq 0$  denote  $\tilde X=\alpha X$, $\tilde Y=\beta Y$, $\tilde U=\alpha U$, $\tilde V=\beta V$. Let $\gamma=\alpha/\beta$. Then $\alpha\neq \beta$ and, in view of Remark \ref{exten}, reference to Theorem \ref{gHV} ends the proof.
\end{proof}	 

In a very interesting recent paper, \cite{SU22}, the authors announced (independently) a  result (see their Theorem 1.1 (ii)), which is equivalent to the independence property observed in  Theorem \ref{gHV}. 

\section{Limiting cases of the Kummer independence property} 
We will show that Theorem \ref{gHV} yields, as limiting cases, several well-known independence properties: the Lukacs property, \cite{L55}, the Kummer-Gamma property, \cite{KV12}, the Matsumoto-Yor property, \cite{MY01, LW00} and the KdV property, \cite{CS2020}.

We will rely on the following version of Theorem 5.5 from \cite{Bil68}.

\begin{theorem}\label{Bil}
	Let $X_n\stackrel{d}{\to} X$, with $X_n$ and $X$ assuming values in a separable metric space $S$. Let $\phi_n,\phi:S\to S$ be measurable functions such that for any $x\in S$ and any sequence $x_n\to x$ we have $\phi_n(x_n)\to\phi(x)$. 
	
	Then
	$$
	\phi_n(X_n)\stackrel{d}{\to}\phi(X).
	$$
\end{theorem}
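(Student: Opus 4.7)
My plan is to reduce the claimed weak convergence to almost sure convergence via Skorokhod's representation theorem, and then invoke the hypothesis pointwise on sample paths. The key enabling fact is that $S$ is separable, which is precisely what is needed for Skorokhod's theorem to apply.

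First, since $X_n \stackrel{d}{\to} X$ in the separable metric space $S$, Skorokhod's representation theorem produces random elements $Y_n, Y$ defined on a common probability space $(\Omega,\mathcal F,\P)$ such that $Y_n \stackrel{d}{=} X_n$, $Y \stackrel{d}{=} X$, and $Y_n \to Y$ almost surely. On the full-measure event $\{\omega : Y_n(\omega)\to Y(\omega)\}$, setting $x_n := Y_n(\omega)$ and $x := Y(\omega)$ makes the hypothesis directly applicable, yielding $\phi_n(Y_n(\omega)) \to \phi(Y(\omega))$. Hence $\phi_n(Y_n) \to \phi(Y)$ almost surely, and in particular $\phi_n(Y_n) \stackrel{d}{\to} \phi(Y)$.

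To finish, measurability of $\phi_n$ and $\phi$ (which is part of the hypothesis) guarantees $\phi_n(Y_n) \stackrel{d}{=} \phi_n(X_n)$ and $\phi(Y) \stackrel{d}{=} \phi(X)$, so the weak convergence just obtained transfers back to the original sequence, giving $\phi_n(X_n) \stackrel{d}{\to} \phi(X)$. There is no genuine obstacle here: the hypothesis that $\phi_n(x_n)\to\phi(x)$ whenever $x_n\to x$ — a form of continuous convergence of the sequence $(\phi_n)$ to $\phi$ — is exactly the pathwise input needed to combine with the Skorokhod coupling, and separability of $S$ makes the representation available with no restriction on the limiting law. The whole argument is a straightforward generalization of the standard continuous mapping theorem, where constancy $\phi_n\equiv\phi$ is replaced by the joint continuity hypothesis.
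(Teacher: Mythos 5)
Your proof is correct. Note first that the paper does not actually prove this statement: it is quoted as a version of Theorem~5.5 of Billingsley's \emph{Convergence of Probability Measures} and used as a black box, so there is no in-paper argument to compare against. Your Skorokhod route is a legitimate and standard way to establish it, and the hypothesis ``$\phi_n(x_n)\to\phi(x)$ for every $x$ and every $x_n\to x$'' is indeed exactly the pathwise input that turns the almost-sure coupling into almost-sure convergence of the images. Two small points of care. First, the classical Skorokhod representation theorem is usually stated for Polish (complete separable) spaces; here $S$ is only assumed separable, so you need the Dudley--Wichura form of the representation theorem, which requires only that the limit law be concentrated on a separable set --- automatic here --- and no completeness. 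You should cite that version explicitly, since otherwise a reader may object that completeness is missing from the hypotheses. Second, Billingsley's own proof is a direct portmanteau argument (for closed $F$ one bounds $\limsup_n P_n(h_n^{-1}F)$ by $P(E\cup h^{-1}F)$, where $E$ is the set of points at which continuous convergence fails); that route is marginally stronger, as it only needs $P(E)=0$ rather than $E=\emptyset$, and avoids invoking the representation theorem at all. For the statement as given in the paper, your argument suffices.
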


Note that except the equality  $\mathcal K_0(a,b,c)=\mathrm{Gamma}(a,c)$, in case $b>a>0$ we have $\mathcal K_1(a,b,0)=\mathrm{Beta}_{II}(a,b-a)$, where  the $\mathrm{Beta}_{II}(p,q)$ law with $p,q>0$ is defined by the density
	$$
	f(x)\propto \tfrac{x^{p-1}}{(1+x)^{p+q}}\,\mathbf 1_{(0,\infty)}.
	$$

Several other distributions can be obtained by taking weak limits of Kummer laws.


\begin{proposition}\label{gamma1} If $\alpha\to\infty$ then
	
	\begin{enumerate} 
		\item when $a>b$ 
	\begin{equation}\label{ga}
		\mathcal K_\alpha(a,b,c)\stackrel{w}{\to}\mathrm{Gamma}(a-b,c),
		\end{equation}
	\item when $b>a$ 
		\begin{equation}\label{be}
			\mathcal K_1(a,b,c/\alpha)\stackrel{w}{\to}
			\mathrm{Beta}_{II}(a,b-a),\end{equation}
\item when $b>0$ 
	\begin{equation}\label{GIG}
		\mathcal K_\alpha(\alpha b+a_1,\alpha b+a_2,c)\stackrel{w}{\to}\mathrm{GIG}(a_1-a_2,c,b).
	\end{equation}
\item when $b>0$
\begin{equation}\label{GIG'}
\mathcal K_\alpha(a \sqrt{\alpha}+b,a\sqrt{\alpha},c)\stackrel{w}{\to}\mathrm{Gamma}(b,c).
\end{equation}
\item when $b>0$ 
\begin{equation}\label{iga}
\mathcal K_\alpha(a \alpha,a\alpha+b,c/\alpha)\stackrel{w}{\to}\mathrm{InvGamma}(b,a),
\end{equation}where $\mathrm{InvGamma}(b,c)$ is defined by the density
$$
f(x)\propto x^{-b-1}\,e^{-c/x}\,\mathbf 1_{(0,\infty)}(x).
$$
\end{enumerate}
\end{proposition}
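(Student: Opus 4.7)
The plan is to handle all five limits uniformly by Scheff\'e's lemma: for each case, I would show that the densities of $\mathcal{K}_\alpha(\cdot)$, suitably arranged, converge pointwise on $(0,\infty)$ to the density of the target law, and then conclude convergence in total variation (hence weak convergence). Rather than computing the awkward normalizing constants of the Kummer law explicitly, I would work with the unnormalized density, establish pointwise convergence of a ratio of the form $\phi_\alpha(x)/\phi_\alpha(x_0)\to\phi(x)/\phi(x_0)$ for a fixed reference point $x_0>0$, and verify a dominating integrable bound so that dominated convergence transfers the statement to the normalized densities.

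The uniform technical ingredient is the asymptotic expansion
\begin{equation*}
\log\!\left(\frac{x}{1+\alpha x}\right)=-\log\alpha-\frac{1}{\alpha x}+\frac{1}{2\alpha^2 x^2}+O(\alpha^{-3}),
\end{equation*}
as $\alpha\to\infty$, valid locally uniformly in $x>0$. This single identity drives all five computations. In (1) it is used only with the fixed exponent $b$, giving $(1+\alpha x)^{b}\sim(\alpha x)^{b}$ and the limit $x^{a-b-1}e^{-cx}$ (integrable since $a>b$). In (2) the expansion is not needed: only $e^{-cx/\alpha}\to 1$ is used. In (3) the expansion is applied with the exponent $\alpha b$ growing linearly in $\alpha$, and precisely the first correction term $-1/(\alpha x)$ multiplied by $\alpha b$ survives as $-b/x$, producing the $e^{-b/x}$ factor in the GIG density, while $(1+\alpha x)^{-a_{2}}$ contributes the $x^{-a_{2}}$ factor. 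In (4) the exponent $a\sqrt{\alpha}$ is too small and even the first correction vanishes in the limit, so only the residual $x^{b-1}e^{-cx}$ factor from the remaining parameters survives. In (5) the exponent $a\alpha$ again produces an $e^{-a/x}$ factor while $(1+\alpha x)^{-b}\sim\alpha^{-b}x^{-b}$ and $x^{-1}$ together supply the $x^{-b-1}$ weight of the inverse gamma density.

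The main obstacle I anticipate is bookkeeping in cases (3)--(5), where diverging $\alpha$-dependent prefactors of the form $\alpha^{-\alpha b}$, $\alpha^{-a\sqrt\alpha}$, $\alpha^{-a\alpha}$ appear and must be verified to be independent of $x$, so that they cancel with the (equally wild) Kummer normalizing constants. The expansion above makes these prefactors manifestly $x$-free, but one has to confirm that the $O$-remainder, once multiplied by the large exponent, still tends to zero pointwise on $(0,\infty)$ \emph{and} that it can be controlled by an integrable majorant. The latter is handled on any compact subinterval by monotonicity in $\alpha$ of the relevant factors, and near $0$ and $\infty$ by explicit bounds combined with the exponential factors $e^{-cx}$ and (in cases (3) and (5)) the effective $e^{-b/x}$ or $e^{-a/x}$ weights produced by the expansion itself. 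Once these bounds are in place, Scheff\'e's lemma closes each of the five cases in the same way.
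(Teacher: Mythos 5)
The paper states this proposition without any proof, so there is nothing to compare your argument against; judged on its own, your plan is the natural one and is essentially correct. The pointwise limits you identify in all five cases are right: writing the unnormalized density as $x^{\cdot}e^{-\cdot}\bigl(\tfrac{\alpha x}{1+\alpha x}\bigr)^{q_\alpha}$ and expanding $\log\tfrac{x}{1+\alpha x}=-\log\alpha-\tfrac{1}{\alpha x}+O(\alpha^{-2})$ does produce the $e^{-b/x}$ (resp.\ $e^{-a/x}$) factors in (3) and (5), the trivial limit in (2), and the disappearance of the correction in (4); the $\alpha$-dependent prefactors are indeed $x$-free and cancel against the normalizing constants once dominated convergence is justified. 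The one place where the execution genuinely requires the extra care you flag is the majorant near $x=0$ in cases (3) and (5): the crude bound $\bigl(\tfrac{\alpha x}{1+\alpha x}\bigr)^{q_\alpha}\le 1$ leaves $x^{a_1-a_2-1}$, which is not integrable at $0$ when $a_1\le a_2$ --- and this is exactly the regime the paper later uses, e.g.\ $\mathcal K_n(nc_3,nc_3+a_3,b_3)\to\mathrm{GIG}(-a_3,b_3,c_3)$. A single integrable majorant valid for all large $\alpha$ does not exist there (for $\alpha x\to 0$ the factor $e^{-b/(x+1/\alpha)}$ stays bounded below), so you should split the half-line: on $\{\alpha x\ge 1\}$ use $\log(1+u)\ge u/2$ for $u\le1$ to get $\bigl(\tfrac{\alpha x}{1+\alpha x}\bigr)^{\alpha b}\le e^{-b/(2x)}$, which restores integrability at $0$, and check directly that the integral over $\{\alpha x<1\}$ is $O(\alpha^{2}2^{-\alpha b})\to 0$ after the substitution $u=\alpha x$. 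With that splitting (and the trivial majorants $x^{b-1}e^{-cx}$ in (4), $x^{a-1}(1+x)^{-b}$ in (2), $x^{-b-1}$ at infinity in (5)), Scheff\'e's lemma closes all five cases as you describe.
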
	

\subsection{The Lukacs property}
The following independence property  is related to the  characterization of the gamma laws proved in \cite{L55}.

\begin{theorem} Assume that $$(X_1,\,Y_1)\sim \mathrm{Gamma}(a_1,c)\otimes\mathrm{Gamma}(b_1,c).$$
	
	Let
	$$
	(U_1,V_1)=\left(\tfrac{Y_1}{X_1},\,X_1+Y_1\right).
	$$
	
	Then $$(U_1,V_1)\sim\mathrm{Beta}_{II}(b_1,a_1)\otimes\mathrm{Gamma}(a_1+b_1,c).$$
\end{theorem}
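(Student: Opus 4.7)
The plan is to obtain this Lukacs independence property as the $\alpha\to\infty$ limit of Theorem \ref{gHV} (with $\beta=0$), combining the Kummer-to-Gamma/$\mathrm{Beta}_{II}$ convergences in Proposition \ref{gamma1} with the continuous-mapping Theorem \ref{Bil}. The guiding observation is that $\mathcal K_0=\mathrm{Gamma}$, so taking $\beta=0$ immediately makes the $Y$-marginal a Gamma; then the parameter choice $a=a_1+b_1$, $b=b_1$ forces, by Proposition \ref{gamma1}(1), the $X$-marginal $\mathcal K_\alpha(a_1+b_1,b_1,c)$ to converge weakly to $\mathrm{Gamma}(a_1,c)=X_1$.

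First I would set $X_\alpha\sim\mathcal K_\alpha(a_1+b_1,b_1,c)$ and $Y\sim\mathrm{Gamma}(b_1,c)$, independent, and read off Theorem \ref{gHV}: the pair
$$\bigl(U^{(\alpha)},V^{(\alpha)}\bigr):=\left(\tfrac{Y}{1+\alpha X_\alpha},\;X_\alpha\,\tfrac{1+\alpha(X_\alpha+Y)}{1+\alpha X_\alpha}\right)$$
is independent, with $U^{(\alpha)}\sim\mathcal K_\alpha(b_1,a_1+b_1,c)$ and $V^{(\alpha)}\sim\mathrm{Gamma}(a_1+b_1,c)$. Since $U^{(\alpha)}\to 0$ as $\alpha\to\infty$, the right object to track is the rescaled $\alpha U^{(\alpha)}$; Remark \ref{exten} gives $\alpha U^{(\alpha)}\sim\mathcal K_1(b_1,a_1+b_1,c/\alpha)$, and then Proposition \ref{gamma1}(2) yields $\alpha U^{(\alpha)}\stackrel{d}{\to}\mathrm{Beta}_{II}(b_1,a_1)$, while $V^{(\alpha)}\sim\mathrm{Gamma}(a_1+b_1,c)$ already has the correct law for every $\alpha$.

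Next I would introduce the maps $\phi_\alpha,\phi:(0,\infty)^2\to(0,\infty)^2$ defined by
$$\phi_\alpha(x,y):=\left(\tfrac{\alpha y}{1+\alpha x},\;\tfrac{x(1+\alpha(x+y))}{1+\alpha x}\right),\qquad \phi(x,y):=\left(\tfrac{y}{x},\;x+y\right),$$
and verify the pointwise condition of Theorem \ref{Bil}: for $(x_\alpha,y_\alpha)\to(x,y)\in(0,\infty)^2$ a short expansion (divide numerator and denominator of each coordinate by $\alpha$) gives $\phi_\alpha(x_\alpha,y_\alpha)\to\phi(x,y)$. Combined with the joint weak convergence $(X_\alpha,Y)\stackrel{d}{\to}(X_1,Y_1)$, which is immediate because $Y$ is fixed and $X_\alpha\stackrel{d}{\to}X_1$, Theorem \ref{Bil} yields $\bigl(\alpha U^{(\alpha)},V^{(\alpha)}\bigr)=\phi_\alpha(X_\alpha,Y)\stackrel{d}{\to}\phi(X_1,Y_1)=(U_1,V_1)$. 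Independence of $\alpha U^{(\alpha)}$ and $V^{(\alpha)}$ at every level $\alpha$ passes to the weak limit, and the marginal identifications above close the proof.

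No real technical obstacle is anticipated; the proof is essentially bookkeeping once the parameter regime is chosen. The only thread of insight is recognizing that the target coordinate $V_1=X_1+Y_1$ dictates sending $\alpha\to\infty$ with $\beta=0$ (via the invariant $X+Y=U+V$ from Theorem \ref{gHV}), which in turn forces the $U^{(\alpha)}$-coordinate to collapse and hence to be rescaled by $\alpha$; the Kummer scaling in Remark \ref{exten} then dovetails with Proposition \ref{gamma1}(2) to produce exactly $\mathrm{Beta}_{II}(b_1,a_1)$ in the limit.
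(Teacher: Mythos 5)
Your proposal is correct and follows essentially the same route as the paper: take $\beta=0$, $\alpha=n\to\infty$ in Theorem \ref{gHV} with $X^{(n)}\sim\mathcal K_n(a_1+b_1,b_1,c)$, rescale the first coordinate by $\alpha$ (the paper's map $\phi_n$ is exactly your $\phi_\alpha$), and combine Proposition \ref{gamma1} parts (1) and (2) with Theorem \ref{Bil}. The parameter choices, the limiting map $\phi(x,y)=(y/x,\,x+y)$, and the identification of the limit laws all coincide with the paper's argument.
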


\begin{proof}
	In view of Theorem \ref{gHV} with $\alpha=n$, $\beta=0$  and Remark \ref{exten} we see that 
	$$(X_1^{(n)},Y_1)\sim \mathcal K_n(a_1+b_1,b_1,c)\otimes \mathrm{Gamma}(b_1,c)$$
	implies that for 
	$$
	\phi_n(x,y)=\left(\tfrac{n y}{1+n x},\,x\tfrac{1+n(x+y)}{1+n x}\right)
	$$
	we have
	\begin{equation}\label{L2}
		\phi_n(X_1^{(n)},Y_1)\sim \mathcal K_1(b_1,a_1+b_1,c/n)\otimes\mathrm{Gamma}(a_1+b_1,c).
	\end{equation}
	
	Since, see  \eqref{ga},  
	$$
	\mathcal K_n(a_1+b_1,b_1,c)\stackrel{w}{\to}\mathrm{Gamma}(a_1,c)\quad \mbox{as }\;n\to\infty,
	$$
	we see that 
	\begin{equation}\label{L1}(X_1^{(n)},Y_1)\stackrel{d}{\to}(X_1,Y_1)\sim \mathrm{Gamma}(a_1,c)\otimes \mathrm{Gamma}(b_1,c).
	\end{equation}
	
	Moreover, for $x_n\to x>0$ and $y>0$ 
	$$
\phi_n(x_n,y)=	\left(\tfrac{y}{x_n+1/n},\,\tfrac{x_n}{x_n+1/n}\,(x_n+y+1/n)\right)\to\left(\tfrac{y}{x},\,x+y\right)=:\phi(x,y).
	$$
	Thus, in view of \eqref{L1}, Theorem \ref{Bil} implies
	\begin{equation}\label{L3}
		\phi_n(X_1^{(n)},Y_1)\stackrel{d}{\to}\phi(X_1,Y_1)=\left(\tfrac{Y_1}{X_1},\,X_1+Y_1\right).
	\end{equation}
	
	On the other hand, \eqref{L2}, in view of \eqref{be}, yields  
	\begin{equation}\label{L4}
		\mathbb P_{\phi_n(X_1^{(n)},Y_1)}\stackrel{w}{\to} \mathrm{Beta}_{II}(b_1,a_1)\otimes \mathrm{Gamma}(a_1+b_1,c).
	\end{equation}
	
	Comparing \eqref{L3} and \eqref{L4} we conclude that
	$$
	(U_1,V_1)=\left(\tfrac{Y_1}{X_1},\,X_1+Y_1\right)\sim \mathrm{Beta}_{II}(b_1,a_1)\otimes \mathrm{Gamma}(a_1+b_1,c).
	$$
\end{proof}

\subsection{The Kummer-Gamma independence property}
The following result was proved in \cite{KV12}; see also \cite{Wes15}.
\begin{theorem}
	Assume that 
	$$
	(X_2,Y_2)\sim \mathcal K(a_2,a_2+b_2,c_2)\otimes \mathrm{Gamma}(b_2,c_2).
	$$
	
	Let
	$$
	(U_2,V_2)=\left(X_2+Y_2,\;\tfrac{1+\tfrac{1}{X_2+Y_2}}{1+\tfrac{1}{X_2}}\right).
	$$
	
	Then
	$$
	(U_2,V_2)\sim \mathcal K(a_2+b_2,a_2,c_2)\otimes\mathrm{Beta}_I(a_2,b_2),
	$$
	where $\mathrm{Beta}_I(p,q)$ has the density
	$$f(y)\propto y^{p-1}(1-y)^{q-1}\mathbf 1_{(0,1)}(y).$$
\end{theorem}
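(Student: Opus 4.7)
The plan is to obtain this Kummer--Gamma independence property as a degeneration of Theorem \ref{gHV}, in exactly the same spirit as the Lukacs derivation above. Matching the target marginals against the available limit relations \eqref{ga} and \eqref{be} uniquely dictates the parameter choice: I would set $\alpha=1$, $\beta=n\to\infty$, and $(a,b,c)=(a_2,\,a_2+b_2,\,c_2)$, so that the inputs are $X^{(n)}\sim \mathcal K(a_2,a_2+b_2,c_2)$ (whose law does not depend on $n$) and $Y^{(n)}\sim \mathcal K_n(a_2+b_2,a_2,c_2)$, independent. By \eqref{ga}, $Y^{(n)}\stackrel{d}{\to}\mathrm{Gamma}(b_2,c_2)=Y_2$, and independence then gives $(X^{(n)},Y^{(n)})\stackrel{d}{\to}(X_2,Y_2)$.

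Theorem \ref{gHV} produces $(U^{(n)},V^{(n)})\sim \mathcal K(a_2+b_2,a_2,c_2)\otimes \mathcal K_n(a_2,a_2+b_2,c_2)$, with $U^{(n)}$ already matching the desired target and $V^{(n)}$ degenerating to $0$. I would therefore rescale: by Remark \ref{exten}, $nV^{(n)}\sim \mathcal K(a_2,a_2+b_2,c_2/n)$, which by \eqref{be} converges weakly to $\mathrm{Beta}_{II}(a_2,b_2)$, so the continuous contraction $w\mapsto w/(1+w)$ yields $nV^{(n)}/(1+nV^{(n)})\stackrel{d}{\to}\mathrm{Beta}_I(a_2,b_2)$. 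Using the independence of $U^{(n)}$ and $V^{(n)}$ from Theorem \ref{gHV}, it follows that
$$
\left(U^{(n)},\ \tfrac{nV^{(n)}}{1+nV^{(n)}}\right)\ \stackrel{d}{\to}\ \mathcal K(a_2+b_2,a_2,c_2)\,\otimes\,\mathrm{Beta}_I(a_2,b_2).
$$

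To identify this joint limit with $(U_2,V_2)$, I would introduce the continuous map
$$
\phi_n(x,y) := \left(y\,\tfrac{1+n(x+y)}{1+x+ny},\ \tfrac{nx(1+x+y)}{1+x+ny+nx(1+x+y)}\right),
$$
so that by construction $\phi_n(X^{(n)},Y^{(n)})=(U^{(n)},\,nV^{(n)}/(1+nV^{(n)}))$, and then check by a direct computation that for any $(x_n,y_n)\to(x,y)\in(0,\infty)^2$
$$
\phi_n(x_n,y_n)\to \left(x+y,\ \tfrac{x(1+x+y)}{(1+x)(x+y)}\right)=\left(x+y,\ \tfrac{1+1/(x+y)}{1+1/x}\right)=:\phi(x,y).
$$
Theorem \ref{Bil} then gives $\phi_n(X^{(n)},Y^{(n)})\stackrel{d}{\to}\phi(X_2,Y_2)=(U_2,V_2)$, and matching the two distributional limits closes the argument.

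I do not foresee any serious obstacle: the one nontrivial choice is the rescaling $V^{(n)}\mapsto nV^{(n)}/(1+nV^{(n)})$, which simultaneously prevents the second component of $\phi_n$ from collapsing and converts the $\mathrm{Beta}_{II}$ limit of $nV^{(n)}$ into the desired $\mathrm{Beta}_I$ factor; beyond this, everything is a routine assembly of \eqref{ga}, \eqref{be}, Remark \ref{exten} and Theorem \ref{Bil}.
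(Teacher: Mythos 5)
Your proposal is correct and follows essentially the same route as the paper: the same specialization $\alpha=1$, $\beta=n$, $(a,b,c)=(a_2,a_2+b_2,c_2)$ of Theorem \ref{gHV}, the same rescaling $V^{(n)}\mapsto nV^{(n)}$ justified by Remark \ref{exten} and \eqref{be}, and the same two-sided limit identification via Theorem \ref{Bil}. The only (cosmetic) difference is that you compose the contraction $v\mapsto v/(1+v)$ into $\phi_n$ from the outset, whereas the paper first identifies the $\mathrm{Beta}_{II}$ limit of $\widetilde V_2=\tfrac{X_2}{Y_2}(1+X_2+Y_2)$ and applies that map at the very end.
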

\begin{proof}
	In view of Theorem \ref{gHV} with $\alpha=1$ and $\beta=n$ we see that 
	$$
	(X_2,\,Y_2^{(n)})\sim\mathcal K_1(a_2,a_2+b_2,c_2)\otimes \mathcal K_n(a_2+b_2,a_2,c_2)
	$$
	implies that for
	$$
	\widetilde{\phi}_n(x,y)=\left(y\,\tfrac{1+n(x+y)}{1+x+n y},\,n x\,\tfrac{1+x+y}{1+x+n y}\right)
	$$
	we have
	\begin{equation}\label{KV2}
		\widetilde\phi_n(X_2,Y_2^{(n)})\sim \mathcal K_1(a_2+b_2,a_2,c_2)\otimes \mathcal K_1(a_2, a_2+b_2,c_2/n).
	\end{equation}

	Since, see  \eqref{ga},
	$$
	\mathcal K_n(a_2+b_2, a_2,c)\stackrel{w}{\to}\mathrm{Gamma}(b_2,c_2),
	$$
	we see that
	\begin{equation}\label{KV1}
		\mathbb P_{X_2,\,Y_2^{(n)}}\stackrel{w}{\to} \mathcal K_1(a_2,a_2+b_2,c_2)\otimes \mathrm{Gamma}(b_2,c_2).
	\end{equation}
	Moreover, for any $x>0$ and $y_n\to y>0$ we have
	$$
	\widetilde{\phi}_n(x,y_n)=\left(\tfrac{y_n}{y_n+(1+x)/n}(x+y_n+\tfrac1{n}),\tfrac{x}{y_n+(1+x)/n}(1+x+y_n)\right)
	\to \left(x+y,\,\tfrac{x}{y}(1+x+y)\right)=:\widetilde{\phi}(x,y).
$$
	
	Consequently, by Theorem \ref{Bil}, in view of \eqref{KV1},  we have 
	\begin{equation}\label{KV4}
		\widetilde \phi_n(X_2,Y_2^{(n)})\stackrel{d}{\to}\widetilde \phi(X_2,Y_2)=\left(X_2+Y_2,\,\tfrac{X_2}{Y_2}(1+X_2+Y_2)\right).
	\end{equation}
	
	On the other hand, \eqref{KV2}, in view of  \eqref{be}, yields 
	\begin{equation}\label{KV5}
		\mathbb P_{\widetilde{\phi}_n(X_2,Y_2^{(n)})}\stackrel{w}{\to} \mathcal K_1(a_2+b_2,a_2,c)\otimes\mathrm{Beta}_{II}(a_2,b_2).
	\end{equation}
	
	Comparing \eqref{KV4} and \eqref{KV5} we conclude that
	$$
	(U_2,\widetilde V_2):=\left(X_2+Y_2,\,\tfrac{X_2}{Y_2}(1+X_2+Y_2)\right)\sim \mathcal K_1(a_2+b_2,a_2,c)\otimes\mathrm{Beta}_{II}(a_2,b_2).
	$$
	
	But for $h(u,v)=(u,\tfrac{v}{1+v})$ and $\phi=h\circ \widetilde \phi$ we have
	$$
	\phi(x,y)=\left(x+y,\,\tfrac{\tfrac{x}{y}(1+x+y)}{1+\tfrac{x}{y}(1+x+y)}\right)=\left(x+y,\,\tfrac{1+\tfrac{1}{x+y}}{1+\tfrac{1}{x}}\right).
	$$
	Thus
	$$(U_2,\,V_2)=\left(X_2+Y_2,\,\tfrac{1+\tfrac{1}{X_2+Y_2}}{1+\tfrac{1}{X_2}}\right)=\phi(X_2,Y_2)=\left(U_2,\tfrac{\widetilde V_2}{1+\widetilde V_2}\right)\sim \mathcal K_1(a_2+b_2,a_2,c)\otimes \mathrm{Beta}_{I}(a_2,b_2).$$
	Here we used the fact that if $Z\sim \mathrm{Beta}_{II}(p,q)$ then $\tfrac{Z}{1+Z}\sim \mathrm{Beta}_I(p,q)$.

\end{proof}

\subsection{The Matsumoto-Yor property}
The following result was proved in Matsumoto and Yor \cite{MY01, MY03}; see also  \cite{LW00}.
\begin{theorem}Assume that
	$$
	(X_3,Y_3)\sim \mathrm{GIG}(-a_3,b_3,c_3)\otimes \mathrm{Gamma}(a_3,b_3).
	$$
	Let
	$$
	(U_3,V_3)=\left(\tfrac{1}{X_3+Y_3},\,\tfrac{1}{X_3}-\tfrac{1}{X_3+Y_3}\right).
	$$
	Then
	$$
	(U_3,V_3)\sim \mathrm{GIG}(-a_3,c_3,b_3)\otimes \mathrm{Gamma}(a_3,c_3).
	$$
\end{theorem}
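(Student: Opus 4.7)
The plan is to derive the Matsumoto-Yor independence property as a limiting case of Theorem \ref{gHV}, following exactly the template set up for the Lukacs and Kummer-Gamma properties: choose $n$-dependent scales $\alpha_n,\beta_n$ and Kummer parameters so that $(X^{(n)},Y^{(n)})$ and $(U^{(n)},V^{(n)})=\psi_{\alpha_n,\beta_n}(X^{(n)},Y^{(n)})$ have the correct weak limits, verify pointwise convergence of a deterministic map $\Phi_n\circ\psi_{\alpha_n,\beta_n}$ to the Matsumoto-Yor transformation, and conclude by Theorem \ref{Bil}. Two new features distinguish this case from the earlier ones: the scales $\alpha_n$ and $\beta_n$ must blow up at \emph{different} rates, and a non-trivial $n$-dependent post-composition $\Phi_n$ is needed, because the pointwise limit of $\psi_{\alpha_n,\beta_n}$ alone turns out to be degenerate.

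Concretely, I would apply Theorem \ref{gHV} with $\alpha=n$, $\beta=n^2$ (distinct for $n\ge 2$) and Kummer parameters $a=nc_3$, $b=nc_3+a_3$, $c=b_3$, producing independent
$$
X^{(n)}\sim\mathcal K_n(nc_3,nc_3+a_3,b_3),\qquad Y^{(n)}\sim\mathcal K_{n^2}(nc_3+a_3,nc_3,b_3).
$$
Limit \eqref{GIG} yields $X^{(n)}\stackrel{d}{\to}\mathrm{GIG}(-a_3,b_3,c_3)=X_3$, while \eqref{GIG'} (with $\alpha=n^2$, $\sqrt{\alpha}=n$, $a=c_3$, $b=a_3$) yields $Y^{(n)}\stackrel{d}{\to}\mathrm{Gamma}(a_3,b_3)=Y_3$. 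Set $(U^{(n)},V^{(n)}):=\psi_{n,n^2}(X^{(n)},Y^{(n)})$; Theorem \ref{gHV} gives $U^{(n)}\perp V^{(n)}$. Limit \eqref{GIG} again gives $U^{(n)}\stackrel{d}{\to}\mathrm{GIG}(a_3,b_3,c_3)$, so $1/U^{(n)}\stackrel{d}{\to}\mathrm{GIG}(-a_3,c_3,b_3)=U_3$; and Remark \ref{exten} yields $nV^{(n)}\sim\mathcal K_n(nc_3,nc_3+a_3,b_3/n)$, to which \eqref{iga} applies to give $nV^{(n)}\stackrel{d}{\to}\mathrm{InvGamma}(a_3,c_3)$, hence $1/(nV^{(n)})\stackrel{d}{\to}\mathrm{Gamma}(a_3,c_3)=V_3$. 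Setting $\Phi_n(u,v)=(1/u,1/(nv))$, we obtain $\Phi_n(U^{(n)},V^{(n)})\stackrel{d}{\to}\mathrm{GIG}(-a_3,c_3,b_3)\otimes\mathrm{Gamma}(a_3,c_3)$.

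On the transformation side, for $(x_n,y_n)\to(x,y)\in(0,\infty)^2$, the dominances $1+n^2(x+y)\sim n^2(x+y)$, $1+n(x+y)\sim n(x+y)$, and $1+nx+n^2y\sim n^2y$ give
$$
\Phi_n\circ\psi_{n,n^2}(x_n,y_n)\longrightarrow \left(\tfrac{1}{x+y},\,\tfrac{1}{x}-\tfrac{1}{x+y}\right),
$$
so Theorem \ref{Bil} delivers $\Phi_n\circ\psi_{n,n^2}(X^{(n)},Y^{(n)})\stackrel{d}{\to}(U_3,V_3)$. Identifying the two descriptions of this limit pair concludes the proof.

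The main obstacle is discovering the asymmetric scaling. A GIG marginal for $X$ forces the two first parameters of $\mathcal K_{\alpha_n}$ to be linear in $\alpha_n=n$ (via \eqref{GIG}), whereas a Gamma marginal for $Y$ forces the two first parameters of $\mathcal K_{\beta_n}$ to be linear in $\sqrt{\beta_n}$ (via \eqref{GIG'}); this dictates $\beta_n=n^2$. The resulting rate gap makes the common denominator of $\psi_{n,n^2}$ dominated by its $n^2y$ term, collapsing the second output to zero, and the inversion-plus-rescaling inside $\Phi_n$ is exactly what is required to restore a non-degenerate limit and to convert the Kummer/GIG/InvGamma marginals into the GIG/Gamma form demanded by Matsumoto-Yor.
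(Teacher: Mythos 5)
Your proposal is correct and follows essentially the same route as the paper: the same scaling $\alpha=n$, $\beta=n^2$, the same Kummer parameters $(nc_3,\,nc_3+a_3,\,b_3)$, the same appeals to \eqref{GIG}, \eqref{GIG'} and \eqref{iga}, and the same use of Theorem \ref{Bil}. The only cosmetic difference is that the paper absorbs the factor $n$ into the definition of $\widetilde\phi_n$ (writing $nx$ in the second coordinate) and then inverts both coordinates via $h(u,v)=(u^{-1},v^{-1})$, whereas you keep $\psi_{n,n^2}$ intact and post-compose with $\Phi_n(u,v)=(1/u,\,1/(nv))$ — these are the same argument.
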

\begin{proof}
	In view of Theorem \ref{gHV} with $\alpha=n$ and $\beta=n^2$ we see that if 
	\begin{equation}\label{MY1}\left(X_3^{(n)},\,Y_3^{(n)}\right)\sim\mathcal K_{n}(nc_3, nc_3+a_3,b_3)\otimes \mathcal K_{n^2}(nc_3+a_3,nc_3, b_3)\end{equation}
	then with 
	$$
	\widetilde\phi_n(x,y)=\left(y\tfrac{1+n^2(x+y)}{1+nx+n^2y},\,nx\tfrac{1+n(x+y)}{1+nx+n^2y}\right)
	$$
	we have
	\begin{equation}\label{MY2}
		\widetilde\phi_n\left(X_3^{(n)},\,Y_3^{(n)}\right)\sim \mathcal K_n(nc_3+a_3,nc_3,b_3)\otimes  \mathcal K_{n}(nc_3,nc_3+a_3,b_3/n).
	\end{equation}
	Since, see \eqref{GIG} and \eqref{GIG'},
		$$
	K_n(nc_3,nc_3+a_3,b_3)\stackrel{w}{\to}\mathrm{GIG}(-a_3,b_3,c_3)
	$$
	and
	$$
	\mathcal K_{n^2}(nc_3+a_3,nc_3,b_3)\stackrel{w}{\to}\mathrm{Gamma}(a_3,b_3)
	$$
	we see that
	\begin{equation}\label{MY3}
		\left(X_3^{(n)},\,Y_3^{(n)}\right)\stackrel{d}{\to}(X_3,Y_3)\sim \mathrm{GIG}(-a_3,b_3,c_3)\otimes\mathrm{Gamma}(a_3,b_3).
	\end{equation}
	
	Moreover, if $x_n\to x>0$ and $y_n\to y>0$ then we have
	$$
	\widetilde \phi_n(x_n,y_n)=\left(\tfrac{y}{y+\frac{x}{n}+\frac{1}{n^2}}(x+y+\tfrac{1}{n^2}),\,x\tfrac{x+y+\frac{1}{n}}{y+\frac{x}{n}+\frac{1}{n^2}}\right)\to\left(x+y,\,\tfrac{x(x+y)}{y}\right).
	$$
	
	Consequently, by \eqref{MY3}, in view of Theorem \ref{Bil} we have 
	\begin{equation}\label{MY4}
		\widetilde\phi_n\left(X_3^{(n)},\,Y_3^{(n)}\right)\stackrel{d}{\to}\widetilde \phi(X_3,Y_3)=\left(X_3+Y_3,\tfrac{X_3(X_3+Y_3)}{Y_3}\right).
	\end{equation}
	
	On the other hand, since \eqref{GIG} and \eqref{iga},
	$$
	K_n(nc_3+a_3,nc_3,b_3)\stackrel{w}{\to} \mathrm{GIG}(a_3,b_3,c_3)
	$$
	and
	$$
	\mathcal K_{n}(nc_3,nc_3+a_3,b_3/n)\stackrel{w}{\to} \mathrm{InvGamma}(a_3,c_3)
	$$
	we conclude that
	\begin{equation}\label{MY5}
		\mathbb P_{\widetilde\phi_n\left(X_3^{(n)},\,Y_3^{(n)}\right)}\stackrel{w}{\to}\mathrm{GIG}(a_3,b_3,c_3)\otimes  \mathrm{InvGamma}(a_3,c_3).
	\end{equation}
	Combining \eqref{MY4} with \eqref{MY5} we get
	$$
	(\widetilde U_3,\,\widetilde V_3):=\left(X_3+Y_3,\,\tfrac{X_3(X_3+Y_3)}{Y_3}\right)\sim \mathrm{GIG}(a_3,b_3,c_3)\otimes  \mathrm{InvGamma}(a_3,c_3).
	$$
	
	Denoting $h(u,v)=(u^{-1},v^{-1})$ and $\phi=h\circ \widetilde\phi$, we get
	$$
	\phi(x,y)=\left(\tfrac{1}{x+y},\,\tfrac{y}{x(x+y)}\right)=\left(\tfrac{1}{x+y},\,\tfrac{1}{x}-\tfrac{1}{x+y}\right).
	$$
	
	Finally, we obtain
	$$
	(U_3,V_3)=\left(\tfrac{1}{X_3+Y_3},\,\tfrac{1}{X_3}-\tfrac{1}{X_3+Y_3}\right)=\phi(X_3,Y_3)=\left(\tfrac{1}{\widetilde U_3},\,\tfrac{1}{\widetilde V_3}\right)\sim\mathrm{GIG}(-a_3,c_3,b_3)\otimes\mathrm{Gamma}(a_3,c_3)
	$$
\end{proof}

\subsection{The discrete KdV independence property}
The following result was proved in \cite{CS2020}; see also \cite{BN21} and \cite{LW22}.
\begin{theorem}Assume that
	\begin{equation}\label{GMY0}
		(X_4,Y_4)\sim \mathrm{GIG}(-a_4,\alpha b_4,c_4)\otimes \mathrm{GIG}(-a_4,\beta c_4,b_4).
	\end{equation}
	Let
	$$
	(U_4,V_4)=\left(Y_4\tfrac{1+\alpha X_4Y_4}{1+\beta X_4Y_4},\,X_4\tfrac{1+\beta X_4Y_4}{1+\alpha X_4Y_4}\right).
	$$
	Then
	\begin{equation}\label{GMY00}
		(U_4,V_4)\sim \mathrm{GIG}(-a_4,\alpha c_4,b_4)\otimes \mathrm{GIG}(-a_4,\beta b_4,c_4).
	\end{equation}
\end{theorem}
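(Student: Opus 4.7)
The plan is to derive the discrete KdV independence property as a scaling limit of Theorem \ref{gHV}, following the same template as Subsections 3.1--3.3. I would set up
\[
(X^{(n)},Y^{(n)})\sim \mathcal K_{\alpha^{\mathrm{Thm}}_n}(a_n,b_n,c_n)\otimes \mathcal K_{\beta^{\mathrm{Thm}}_n}(b_n,a_n,c_n)
\]
with Kummer scales $\alpha^{\mathrm{Thm}}_n,\beta^{\mathrm{Thm}}_n\to\infty$ (growing at possibly different rates in $n$) and Kummer parameters $a_n,b_n,c_n$ chosen so that, after fixed deterministic rescalings of each coordinate justified by Remark \ref{exten}, Proposition \ref{gamma1}(3) yields $(X^{(n)},Y^{(n)})\stackrel{d}{\to}(X_4,Y_4)$. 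Simultaneously, the map of Theorem \ref{gHV} conjugated by those rescalings must converge pointwise on $(0,\infty)^2$ to the KdV map $F_{dK}^{(\alpha,\beta)}$.

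Given such a setup, Theorem \ref{gHV} identifies the joint law of the Kummer image as
\[
\mathcal K_{\alpha^{\mathrm{Thm}}_n}(b_n,a_n,c_n)\otimes\mathcal K_{\beta^{\mathrm{Thm}}_n}(a_n,b_n,c_n),
\]
and a second application of Proposition \ref{gamma1}(3) together with the rescalings identifies its weak limit as the product $\mathrm{GIG}(-a_4,\alpha c_4,b_4)\otimes\mathrm{GIG}(-a_4,\beta b_4,c_4)$. On the other hand, the input convergence combined with the pointwise map convergence lets Theorem \ref{Bil} conclude that this same image converges in distribution to $F_{dK}^{(\alpha,\beta)}(X_4,Y_4)=(U_4,V_4)$. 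Comparing the two descriptions of the limit yields the claimed independence and marginal laws of $(U_4,V_4)$.

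The main obstacle is that the rigid swap structure in Theorem \ref{gHV}---shared Kummer rate $c_n$ and interchanged first and second Kummer parameters of $X^{(n)}$ and $Y^{(n)}$---constrains the two GIG limits produced by \eqref{GIG} to carry \emph{opposite} indices, whereas both $X_4$ and $Y_4$ are required to have the same index $-a_4$. Overcoming this will, as in the Matsumoto--Yor subsection, almost certainly require composing with a post-processing step---an inversion together with a constant rescaling applied to a single coordinate, analogous to the $h(u,v)=(1/u,1/v)$ used there---which flips the sign of one GIG index and brings its $(a,b)$ GIG parameters into the form $(\beta c_4,b_4)$ required by $Y_4$. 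The Kummer scales themselves will most likely have to grow at different powers of $n$, by analogy with the choice $(n,n^2)$ in Matsumoto--Yor, in combination with coordinate-wise rescalings via Remark \ref{exten}. Once the correct assembly of scales, parameters, and pre- and post-rescalings is identified, the remaining verifications reduce to routine applications of Proposition \ref{gamma1}, Remark \ref{exten}, Theorem \ref{gHV} and Theorem \ref{Bil}.
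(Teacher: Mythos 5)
Your high-level strategy is exactly the paper's: realize $(X_4,Y_4)$ as a weak limit of the Kummer pair of Theorem \ref{gHV} with scales tending to infinity, pass the map to the limit via Theorem \ref{Bil}, and identify the limiting image law via Proposition \ref{gamma1}. You also correctly diagnose the central obstacle, namely that the swap structure of Theorem \ref{gHV} forces the two GIG limits produced by \eqref{GIG} to carry opposite indices $a_4$ and $-a_4$, while both $X_4$ and $Y_4$ must have index $-a_4$. However, the proposal stops precisely where the nontrivial work begins: you never exhibit the scales, the Kummer parameters, or the conjugating maps, and you explicitly defer "the correct assembly" to be identified later. That assembly is the actual content of the proof, so as written this is a genuine gap rather than a proof.

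Two of your specific guesses also point in the wrong direction. First, the Kummer scales do \emph{not} grow at different powers of $n$: the paper substitutes $\alpha\mapsto n/\alpha$ and $\beta\mapsto n/\beta$ (both linear in $n$, differing only by constants) and takes
$\left(X_4^{(n)},Y_4^{(n)}\right)\sim\mathcal K_{n/\alpha}(nb_4+a_4,nb_4,c_4)\otimes\mathcal K_{n/\beta}(nb_4,nb_4+a_4,c_4)$,
so that \emph{both} marginals converge through \eqref{GIG} alone, with no coordinate-wise rescaling of the Kummer variables in the spirit of Remark \ref{exten}; the analogy with the $(n,n^2)$ choice of the Matsumoto--Yor subsection is misleading here. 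Second, a single post-processing inversion on one coordinate does not suffice: the limit of $(X_4^{(n)},Y_4^{(n)})$ is $\mathrm{GIG}(a_4,c_4,\alpha b_4)\otimes\mathrm{GIG}(-a_4,c_4,\beta b_4)$, which equals the law of $g(X_4,Y_4)$ for $g(x,y)=(1/x,\beta y)$, so an inversion-plus-scaling is needed on the \emph{input} side to flip the index of the first coordinate (using that $1/R\sim\mathrm{GIG}(-p,b,a)$ when $R\sim\mathrm{GIG}(p,a,b)$), and a second one, $h(x,y)=(x/\alpha,1/y)$, on the \emph{output} side to flip the index of the second coordinate of the limiting image $\widetilde\phi(\widetilde X_4,\widetilde Y_4)$, where $\widetilde\phi(x,y)=\left(\alpha y\,\tfrac{x+y}{\beta x+\alpha y},\,\beta x\,\tfrac{x+y}{\beta x+\alpha y}\right)$ is the pointwise limit of the conjugated maps. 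The proof is completed only by verifying the algebraic identity $h\circ\widetilde\phi\circ g=F_{dK}^{(\alpha,\beta)}$; none of these choices or verifications appear in your proposal.
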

\begin{proof} 
	In view of Theorem \ref{gHV} with $\alpha$ changed into $n/\alpha$ and $\beta$ changed into $n/\beta$ we see that if 
	\begin{equation}\label{GMY1}\left(X_4^{(n)},\,Y_4^{(n)}\right)\sim\mathcal K_{n/\alpha}(nb_4+a_4,nb_4,c_4)\otimes \mathcal K_{n/\beta}(nb_4,nb_4+a_4,c_4)\end{equation}
	then with 
	$$
	\widetilde\phi_n(x,y)=\left(y\,\tfrac{1+\tfrac{n}{\beta} (x+y)}{1+n\left(\tfrac{x}{\alpha}+\tfrac{y}{\beta}\right)},\,x\,\tfrac{1+\tfrac{n}{\alpha}(x+y)}{1+n\left(\tfrac{x}{\alpha}+\tfrac{y}{\beta}\right)}\right)
	$$
	we have
	\begin{equation}\label{GMY2}
		\widetilde\phi_n\left(X_4^{(n)},\,Y_4^{(n)}\right)\sim \mathcal K_{n/\alpha}(nb_4,nb_4+a_4,c_4)\otimes  \mathcal K_{n/\beta}(nb_4+a_4,nb_4,c_4).
	\end{equation}
	Since, see \eqref{GIG}, 
	$$
	K_{n/\alpha}(nb_4+a_4,nb_4,c_4)\stackrel{w}{\to} \mathrm{GIG}(a_4,c_4,\alpha b_4)
	$$
	and
	$$
	K_{n/\beta}(nb_4,nb_4+a_4,c_4)\stackrel{w}{\to} \mathrm{GIG}(-a_4,c_4,\beta b_4),
	$$ in view of \eqref{GMY1}, we see that
	\begin{equation}\label{GMY3}
		\left( X_4^{(n)},\,Y_4^{(n)}\right)\stackrel{d}{\to}(\widetilde X_4,\,\widetilde Y_4)\sim\mathrm{GIG}(a_4,c_4,\alpha b_4)\otimes \mathrm{GIG}(-a_4,c_4,\beta b_4).
	\end{equation}
	
	Moreover, if $x_n\to x>0$ and $y_n\to y>0$ then 
	$$
	\widetilde \phi_n(x_n,y_n)=\left(\alpha y\tfrac{x+y+\frac{\beta}{n}}{\beta x+\alpha y+\frac{\alpha\beta}{n}},\,\beta x\tfrac{x+y+\frac{\alpha}{n}}{\beta x+\alpha y+\frac{\alpha\beta}{n}}\right)\to\left(\alpha y\,\tfrac{x+y}{\beta x+\alpha y},\,\beta x\tfrac{x+y}{\beta x+\alpha y}\right)=:\widetilde\phi(x,y).
	$$
	
	Consequently, by \eqref{GMY3}, in view of Theorem \ref{Bil} we have 
	\begin{equation}\label{GMY4}
		\widetilde\phi_n\left(X_4^{(n)},\,Y_4^{(n)}\right)\stackrel{d}{\to}\widetilde \phi(\widetilde X_4,\widetilde Y_4)=\left(\alpha \widetilde Y_4\,\tfrac{\widetilde X_4+\widetilde Y_4}{\beta \widetilde X_4+\alpha \widetilde Y_4},\,\beta \widetilde X_4\tfrac{\widetilde X_4+\widetilde Y_4}{\beta \widetilde X_4+\alpha \widetilde Y_4}\right).
	\end{equation}
	
	On the other hand since, see again \eqref{GIG},
	$$
	\mathcal K_{n/\alpha}(nb_4,nb_4+a_4,c_4)\stackrel{w}{\to}\mathrm{GIG}(-a_4,c_4,\alpha b_4)
	$$
	and 
	$$
	K_{n/\beta}(nb_4+a_4,nb_4,c_4)\stackrel{w}{\to}\mathrm{GIG}(a_4,c_4,\beta b_4)
	$$
	by \eqref{GMY2} we conclude that
	\begin{equation}\label{GMY5}
		\mathbb P_{\widetilde \phi_n(X_4^{(n)},Y_4^{(n)})}\stackrel{w}{\to}\mathrm{GIG}(-a_4,c_4,\alpha b_4)\otimes \mathrm{GIG}(a_4,c_4,\beta b_4).
	\end{equation}
	
	Combining \eqref{GMY4} and \eqref{GMY5} we get
	\begin{equation}\label{GMY6}
		(\widetilde U_4,\widetilde V_4):=\widetilde \phi(\widetilde X_4,\widetilde Y_4)\sim \mathrm{GIG}(-a_4,c_4,\alpha b_4)\otimes \mathrm{GIG}(a_4,c_4,\beta b_4).
	\end{equation}
	
	Note that for $g(x,y)=(1/x,\beta y)$, in view of \eqref{GMY0} and \eqref{GMY3}, we have 
	$$
	g(X_4,Y_4)\stackrel{d}{=}(\widetilde X_4,\widetilde Y_4)
	$$
	(note that if $R \sim \mathrm{GIG}(p,a,b)$ anf if $k>0$, then $kR \sim \mathrm{GIG}(p,a/k,kb)$) and for $h(x,y)=(x/\alpha,1/y)$, in view of \eqref{GMY4} and \eqref{GMY00}, we see that
	$$
	h(\widetilde U_4,\widetilde V_4)\stackrel{d}{=}(U_4,V_4).
	$$
	
	Therefore, defining $\phi=h\circ\widetilde \phi\circ g$ we get  $(U_4,V_4)\stackrel{d}{=}\phi(X_4,Y_4)$. But
	$$
	\phi(x,y)=\left(y\tfrac{1+\alpha xy}{1+\beta xy},\,x\tfrac{1+\beta xy}{1+\alpha xy}\right),
	$$
	which concludes the proof.
	
\end{proof}

\section{The Kummer transform and the characterization}

For a positive random variable $W$ and $\gamma\ge 0$ consider an extended Laplace transform $L_W^{(\gamma)}$ of the form
$$
L_W^{(\gamma)}(s,t,z)=\E\,\tfrac{W^s}{(1+\gamma W)^t}\,e^{-zW}.
$$
We will call it the Kummer transform. Note that the Kummer tranform is well defined at least for $s,z>0$ and $t\in\R$. Moreover, for any fixed $s>0$, $t\in\R$, the Kummer transform as a function of $z>0$,  is just the Laplace transform of the measure $\tfrac{w^s}{(1+\gamma w)^t}\P_W(dw)$, so it uniquely determines the distribution of $W$.
Note also that
\bel{id1}
L_W^{(\gamma)}(s,t,z)+\gamma L_W^{(\gamma)}(s+1,t,z)=L_W^{(\gamma)}(s,t-1,z)
\ee
and for any $k=1,2,\ldots$
\bel{id2}
\tfrac{\partial ^k\,L_W^{(\gamma)}(s,t,z)}{\partial z^k}=(-1)^kL_W^{(\gamma)}(s+k,t,z).
\ee

\begin{proposition}\label{KLT}
	Let $X\sim \mathcal K_{\alpha}(a,b,c)$, $a,c>0$, $b\in\R$. Then
	\begin{equation}\label{KLTK}
	L_X^{(\alpha)}(s,t,z)=\tfrac{\Gamma(a+s)U\left(a+s,a+s-b-t+1,\tfrac{c+z}{\alpha}\right)}{\alpha^s\Gamma(a)U\left(a,a-b+1,\tfrac{c}{\alpha}\right)},
	\end{equation}
	$s>0$, $t\in\R$, $z>-c$,
	where $U$ is the Kummer function (see 13.2.5 in \cite{AbraSt}) defined by
	\begin{equation}\label{Kufu}
	U(a,b,z)=\tfrac{1}{\Gamma(a)}\,\int_0^{\infty}\,\tfrac{x^{a-1}}{(1+x)^{a-b+1}}\,e^{-zx}\,dx,\qquad a,z>0,\;b\in\R. 
	\end{equation}
\end{proposition}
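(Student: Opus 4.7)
The plan is to establish \eqref{KLTK} by direct computation, exploiting the integral representation \eqref{Kufu} of the Kummer function $U$ after a change of variables that matches the density of $\mathcal{K}_\alpha(a,b,c)$ to the integrand in \eqref{Kufu}.

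First I would normalize the density. Since $X \sim \mathcal{K}_\alpha(a,b,c)$ has density $f(x) = C\, x^{a-1} e^{-cx}(1+\alpha x)^{-b} \mathbf 1_{(0,\infty)}(x)$, performing the substitution $y = \alpha x$ converts
$$C^{-1} = \int_0^\infty \tfrac{x^{a-1} e^{-cx}}{(1+\alpha x)^{b}}\, dx = \alpha^{-a}\int_0^\infty \tfrac{y^{a-1} e^{-(c/\alpha) y}}{(1+y)^b}\, dy.$$
Matching the inner integral against \eqref{Kufu} by choosing the parameters so that $a - b' + 1 = b$, i.e. $b' = a - b + 1$, identifies it as $\Gamma(a)\, U(a, a-b+1, c/\alpha)$. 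Hence $C = \alpha^a / \bigl(\Gamma(a) U(a, a-b+1, c/\alpha)\bigr)$.

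Next I would compute $L_X^{(\alpha)}(s,t,z)$ by the same method. By definition,
$$L_X^{(\alpha)}(s,t,z) = C \int_0^\infty \tfrac{x^{s+a-1} e^{-(c+z) x}}{(1+\alpha x)^{b+t}}\, dx,$$
and the substitution $y = \alpha x$ yields
$$L_X^{(\alpha)}(s,t,z) = C \alpha^{-(s+a)} \int_0^\infty \tfrac{y^{s+a-1} e^{-((c+z)/\alpha) y}}{(1+y)^{b+t}}\, dy.$$
Again using \eqref{Kufu}, this time with first parameter $a+s$ and second parameter $(a+s) - (b+t) + 1 = a + s - b - t + 1$, the integral equals $\Gamma(a+s)\, U(a+s, a+s-b-t+1, (c+z)/\alpha)$. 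Substituting the value of $C$ obtained in the first step gives precisely \eqref{KLTK}.

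Finally I would check the range of validity of the parameters: the substitution and the use of \eqref{Kufu} require the integrands to be absolutely integrable at $0$ and at $\infty$. Integrability at $0$ needs $a + s > 0$ (given since $a > 0$ and $s > 0$); integrability at $\infty$ needs $c + z > 0$, i.e. $z > -c$, since the exponential factor then dominates the polynomial behaviour for any real $t$ and any real $b$. This is a routine verification. There is no real obstacle in this proof: the formula is essentially forced by the definition of $U$ and the identification of parameters; the only point demanding care is the correct matching between the indices of $U$ and the exponents $a, b, s, t$ in the integrand after the rescaling.
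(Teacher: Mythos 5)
Your proof is correct and is exactly the computation the paper has in mind: the paper's one-line proof simply observes that the normalizing constant of $\mathcal K_\alpha(a,b,c)$ equals $\alpha^a/\bigl(\Gamma(a)U(a,a-b+1,c/\alpha)\bigr)$ by the definition \eqref{Kufu}, and your rescaling $y=\alpha x$ together with the parameter matching $b'=a-b+1$ (and its shifted version $a+s-b-t+1$) is precisely that argument written out in full.
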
	

\begin{proof} It is a simple consequence of that fact that due to the definition of the Kummer function $U$ in \eqref{Kufu} the normalizing constant of the Kummer distribution $\mathcal K_\alpha(a,b,c)$ has the form 
$$
\tfrac{\alpha^a}{\Gamma(a)U\left(a,a-b+1,\tfrac{c}{\alpha}\right)}.
$$
\end{proof}

\begin{remark}\label{gamma}
	Note that when $b=a+1$, in view of \eqref{Kufu}, we have
	$$
	U(a,a+1,z)=z^{-a},
	$$
	whence, if $X\sim\mathcal K_\alpha(a,0,c)$ then \eqref{KLTK} gives
	$$
	L_X^{(\alpha)}(0,0,z)=\tfrac{c^a}{(c+z)^a},
	$$
	which implies that $X$ is a Gamma random variable, $\mathrm{Gamma}(a,c)$.
\end{remark}

\begin{proposition}\label{unique}Let $b\in\R$, $a,c,\alpha>0$. 
Assume that for some fixed  $(s,t)\in(0,\infty)\times\R$ and all $z>0$
\begin{equation}\label{KLtr}
L_X^{(\alpha)}(s,t,z)=k(s,t)U\left(a+s,a+s-b-t+1,\tfrac{c+z}{\alpha}\right),
\end{equation}
where $k(s,t)$ is a constant (depending also on $\alpha,a,b,c$).
Then $X\sim\mathcal K_{\alpha}(a,b,c)$.
\end{proposition}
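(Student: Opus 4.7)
The plan is to exploit the fact that, for fixed $s>0$ and $t\in\R$, the Kummer transform $z\mapsto L_X^{(\alpha)}(s,t,z)$ is simply the Laplace transform of the finite measure $\tfrac{w^s}{(1+\alpha w)^t}\,\P_X(dw)$ on $(0,\infty)$. If I can also rewrite the right-hand side of \eqref{KLtr} as the Laplace transform of an explicit measure, then uniqueness of Laplace transforms on a half-line will pin down $\P_X$.

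The second step is therefore to manipulate $U\!\left(a+s,a+s-b-t+1,\tfrac{c+z}{\alpha}\right)$ using its integral representation \eqref{Kufu} and the change of variables $x=\alpha w$. A direct computation then yields
$$
U\!\left(a+s,a+s-b-t+1,\tfrac{c+z}{\alpha}\right)=\tfrac{\alpha^{a+s}}{\Gamma(a+s)}\int_0^\infty \tfrac{w^s}{(1+\alpha w)^t}\cdot\tfrac{w^{a-1}e^{-cw}}{(1+\alpha w)^b}\,e^{-zw}\,dw,
$$
i.e.\ it equals (up to a factor independent of $z$) the Laplace transform of the measure $\tfrac{w^s}{(1+\alpha w)^t}\cdot\tfrac{w^{a-1}e^{-cw}}{(1+\alpha w)^b}\,dw$ on $(0,\infty)$.

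The third step is to combine the two identifications. The hypothesis \eqref{KLtr} asserts equality of two Laplace transforms on all of $(0,\infty)$, so by the uniqueness theorem for Laplace transforms of finite measures on the half-line,
$$
\tfrac{w^s}{(1+\alpha w)^t}\,\P_X(dw)\;=\;C\,\tfrac{w^s}{(1+\alpha w)^t}\cdot\tfrac{w^{a-1}e^{-cw}}{(1+\alpha w)^b}\,dw
$$
for some constant $C>0$ (absorbing $k(s,t)$, $\alpha^{a+s}$ and $\Gamma(a+s)$). Since the weight $\tfrac{w^s}{(1+\alpha w)^t}$ is strictly positive on $(0,\infty)$, it may be cancelled on both sides, giving $\P_X(dw)\propto \tfrac{w^{a-1}e^{-cw}}{(1+\alpha w)^b}\,dw$. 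Normalization to a probability measure then fixes the proportionality constant and forces $X\sim \mathcal K_\alpha(a,b,c)$.

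The argument is essentially a corollary of the integral representation \eqref{Kufu} together with Laplace-transform uniqueness; the only point requiring care is the bookkeeping in the change of variables, which has to match parameters so that exactly the weight $\tfrac{w^s}{(1+\alpha w)^t}$ factors out of both sides. Beyond that there is no substantial obstacle, which is why the proposition is stated for a single fixed pair $(s,t)$ rather than requiring the identity in $(s,t)$ as well.
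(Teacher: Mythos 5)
Your argument is correct and is essentially the paper's own proof: the paper likewise identifies the right-hand side of \eqref{KLtr} as the Laplace transform of the tilted Kummer density (via Proposition \ref{KLT}, whose content is exactly your change of variables $x=\alpha w$ in \eqref{Kufu}) and then invokes uniqueness of Laplace transforms. The only organizational difference is that the paper first normalizes at a fixed $z_0>0$ so that both sides become Laplace transforms of probability measures before comparing densities --- a detail worth retaining, since the measure $\tfrac{w^s}{(1+\alpha w)^t}\,\P_X(dw)$ need not be finite (e.g.\ when $s>t$), although uniqueness still applies to measures whose Laplace transforms are finite on a half-line.
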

\begin{proof}
Fix some $z_0>0$. Then \eqref{KLtr} implies
\begin{equation}\label{U/U}
\tfrac{L_X^{(\alpha)}(s,t,z+z_0)}{L_X^{(\alpha)}(s,t,z_0)}=\tfrac{U\left(a+s,a+s-b-t+1,\tfrac{c+z_0+z}{\alpha}\right)}{U\left(a+s,a+s-b-t+1,\tfrac{c+z_0}{\alpha}\right)},\quad z\ge 0.
\end{equation}
This is the Laplace transform of a random variable $Y$ with distribution
\begin{equation}\label{PYdy}
\P_Y(dy)=\tfrac{\tfrac{y^s}{(1+\alpha y)^t}\,e^{-z_0y}\,\P_X(dy)}{L_X^{(\alpha)}(s,t,z_0)}.\end{equation}
In view of Proposition \ref{KLT}, by \eqref{KLTK} and \eqref{U/U}, the random variable $Y$ has the Kummer distribution $\mathcal K_{\alpha}(a+s,b+t,c+z_0)$, i.e.
$$
\P_Y(dy)\propto \tfrac{y^{a+s-1}}{(1+\alpha y)^{b+t}}\,e^{-(c+z_0)y}\,dy.
$$ 
The result follows by comparing the last formula with \eqref{PYdy}.
\end{proof}	

\begin{remark}\label{equind}[Alternative proof of Theorem \ref{gHV}.]
	Let $X$ and $Y$ be independent and $U,V$ are defined as in \eqref{UV}. In view of \eqref{sum}, \eqref{ratxv} and \eqref{ratyu} we then see that if 
	\begin{equation}\label{KLeq}
	L_X^{(\alpha)}(s,t,z)\,L_Y^{(\beta)}(t,s,z)=L_U^{(\alpha)}(t,s,z)\,L_V^{(\beta)}(s,t,z),\quad (s,t,z)\in(0,\infty)\times\R\times(0,\infty)
	\end{equation}
    then $U$ and $V$ are independent. 
	
	Let $(X,Y)\sim\mathcal K_{\alpha}(a,b;c)\otimes \mathcal K_{\beta}(b,a;c)$. Consequently, to prove Theorem \ref{gHV}, it suffices to check that identity  \eqref{KLeq} is satisfied for $(U,V)\sim \mathcal K_{\alpha}(b,a;c)\otimes \mathcal K_{\beta}(a,b;c)$. Referring to \eqref{KLTK}, we see that \eqref{KLeq} is then  equivalent to 
	\begin{equation}\label{Uiden}
	\tfrac{U\left(a+s,a+s-b-t+1,\tfrac{c+z}{\alpha}\right)\,U\left(b+t,b+t-a-s+1,\tfrac{c+z}{\beta}\right)}{\alpha^s\beta^t\,U\left(a,a-b+1,\tfrac{c}{\alpha}\right)\,U\left(b,b-a+1,\tfrac{c}\beta\right)}=\tfrac{U\left(b+t,b+t-a-s+1,\tfrac{c+z}\alpha\right)\,U\left(a+s,a+s-b-t+1,\tfrac{c+z}\beta\right)}{\alpha^t\beta^s\,U\left(b,b-a+1,\tfrac{c}\alpha\right)\,U\left(a,a-b+1,\tfrac{c}\beta\right)}.
	\end{equation}
	To see that \eqref{Uiden} holds true we rely on the following identity for the Kummer function $U$ (see (13.1.29) in \cite{AbraSt})
	\begin{equation}\label{AS:Kum}
	U(a,b,z)=z^{1-b}U(1+a-b,2-b,z).
	\end{equation}
	
	Note also, that in view of \eqref{KLeq}  and \eqref{KLTK} of Proposition \ref{KLT} we have
	$$
	\tfrac{L_X^{(\alpha)}(s,t,z)}{L_U^{(\alpha)}(t,s,z)}=\tfrac{L_V^{(\beta)}(s,t,z)}{L_Y^{(\beta)}(t,s,z)}=c^{a-b}\tfrac{\Gamma(b)}{\Gamma(a)}\,\tfrac{\Gamma(a+s)}{\Gamma(b+t)}\,(c+z)^{b-a+t-s}.
	$$	
\end{remark}

Now we are ready to formulate the main result which is a characterization of Kummer laws by the detailed balance condition with respect to the function $F$ given in \eqref{eff}.
\begin{theorem}
	\label{gHVcharacterization}
Let  $\alpha,\beta\ge 0$, $\alpha\neq \beta$. Let $X,Y$ be positive, independent, non-Dirac random variables and define 
$$U=Y\,\tfrac{1+\beta(X+Y)}{1+\alpha X+\beta Y}\quad\mbox{and}\quad V=X\,\tfrac{1+\alpha(X+Y)}{1+\alpha X+\beta Y}.$$
If $U$ and $V$ are independent, then there exist $a,b,c>0$ such that 
$$(X,Y)\sim\mathcal K_{\alpha}(a,b;c)\otimes \mathcal K_{\beta}(b,a;c).$$
We then have
	$$
	(U,V)\sim \mathcal K_{\alpha}(b,a;c)\otimes \mathcal K_{\beta}(a,b;c).
	$$
\end{theorem}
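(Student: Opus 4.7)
The plan is to encode the independence of $(U,V)$ in a functional equation for the Kummer transforms of $X$ and $Y$, and then to show that this equation forces those transforms, as functions of the Laplace parameter $z$, to satisfy the confluent hypergeometric (Kummer) ODE; Proposition \ref{unique} then identifies the laws.

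First, using independence of $X,Y$, independence of $U,V$, and the identities $U+V=X+Y$, $\tfrac{U}{1+\beta V}=\tfrac{Y}{1+\alpha X}$, $\tfrac{V}{1+\alpha U}=\tfrac{X}{1+\beta Y}$ of \eqref{sum}--\eqref{ratyu}, one obtains, with no assumption on the distributions, the functional equation
\begin{equation}\label{planKL}
L_X^{(\alpha)}(s,t,z)\,L_Y^{(\beta)}(t,s,z) \;=\; L_U^{(\alpha)}(t,s,z)\,L_V^{(\beta)}(s,t,z),\qquad s,t,z>0,
\end{equation}
exactly as in the computation of Remark \ref{equind}, but now read as a constraint rather than a verification. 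The task then reduces to extracting the marginal laws from \eqref{planKL}.

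The main step is to differentiate \eqref{planKL} in $z$ and to use the shift identities \eqref{id1} and \eqref{id2} to convert the resulting $z$-derivatives and $t$-shifts into $s$-shifts. Combining the values of \eqref{planKL} at $(s,t)$, $(s+1,t)$, $(s,t+1)$, $(s+1,t+1)$ one should eliminate the unknown transforms $L_U^{(\alpha)}$ and $L_V^{(\beta)}$, and obtain a closed second-order linear ODE for $\varphi(z):=L_X^{(\alpha)}(s,t,z)$ with $(s,t)$ fixed. After the affine change of variable $w=(c+z)/\alpha$ (the constant $c>0$ to be read off from the asymptotic behaviour of the ratio of the two sides of \eqref{planKL} as $z\to\infty$, cf.\ the ratio formula at the end of Remark \ref{equind}), I expect this ODE to take the Kummer form $w\Phi''+(\kappa-w)\Phi'-\lambda\Phi=0$ with $\lambda,\kappa$ depending linearly on $s,t$ and on two further unknown constants $a,b$. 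Since $\varphi(z)$ is a (normalized) Laplace transform of a positive measure, it must go to $0$ as $z\to+\infty$, which rules out the $M(\lambda,\kappa,w)$ branch and leaves only a positive multiple of $U(\lambda,\kappa,w)$. This puts $L_X^{(\alpha)}(s,t,z)$ in the form \eqref{KLtr} required by Proposition \ref{unique}, hence $X\sim\mathcal K_\alpha(a,b,c)$; the symmetric argument with the roles of $X$ and $Y$ exchanged yields $Y\sim\mathcal K_\beta(b,a,c)$ with the same triple $(a,b,c)$, and the description of $(U,V)$ then follows from Theorem \ref{gHV}.

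The principal obstacle is the ODE extraction. It requires selecting exactly the right linear combinations of differentiated and shifted versions of \eqref{planKL} so that the four Kummer transforms of $U,V$ and of the shifted $X,Y$ all cancel, leaving a relation involving only $\varphi,\varphi',\varphi''$ with polynomial-in-$z$ coefficients; the hypothesis $\alpha\neq\beta$ enters here in a crucial way, preventing the underlying linear system from becoming degenerate and thus ensuring a genuine second-order equation of Kummer type rather than a trivial identity. A secondary technicality is the justification of the $z$-differentiations under the expectation, which should follow from the a priori finiteness of the Kummer transforms on the positive orthant in $(s,t,z)$ used throughout.
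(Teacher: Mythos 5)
Your overall strategy --- encode the independence of $(U,V)$ in the functional equation \eqref{KLeq} for Kummer transforms, force a confluent hypergeometric ODE in $z$, discard the $M$-branch by boundedness, and conclude via Proposition \ref{unique} --- is exactly the paper's strategy. But the step you describe as ``the principal obstacle'' is where the real content lies, and the elimination scheme you propose is not how it can be made to work. Differentiating \eqref{KLeq} in $z$ and combining with the shifts at $(s,t)$, $(s+1,t)$, $(s,t+1)$, $(s+1,t+1)$ does \emph{not} let you cancel $L_U^{(\alpha)}$ and $L_V^{(\beta)}$ and leave a closed second-order ODE for $L_X^{(\alpha)}(s,t,\cdot)$ alone. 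What it actually yields (equations \eqref{4M} and \eqref{MM} in terms of the cross-ratios $M_W$) is a \emph{factored} pair of identities, \eqref{double2}--\eqref{double1}, giving a dichotomy: either $\beta M_X\equiv\alpha M_V$ or $M_X\equiv M_U$ (and similarly for the other pair). The branch $\beta M_X\equiv\alpha M_V$ must be excluded by a separate probabilistic argument --- in the paper this is done by showing it would force the support of $\P_X*\P_V$ to be invariant under translation by $\tfrac1\alpha-\tfrac1\beta\neq 0$, which is impossible. Your proposal contains no trace of this dichotomy or of the exclusion argument, and without it the derivation stalls.

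Even in the surviving branch $M_X\equiv M_U$, one does not obtain an ODE for $L_X$ by elimination; one obtains a \emph{coupled} relation between $L_X$ and $L_U$. Several further steps are then needed before any Kummer ODE appears: proving that the ratio $A(s,z)/B(t,z)$ of the two cross-ratios separates as $a(s)/b(t)$ (a computation in which the numerator of a $z$-derivative must be shown to vanish identically using \eqref{id1}), showing that $a$ and $b$ are affine and that $f(z)=(\kappa z+C)^{-1}$, and ruling out the degenerate case $\kappa=0$ (which would force $X$ to be Dirac). Only then does substituting back produce a second-order ODE --- and it is an ODE for (a shift of) $L_U$, not $L_X$; $L_X$ is recovered by symmetry of the relation $(z+c)^{K(s,t)}L_X(s,t)=c(s,t)L_U(t,s)$. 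Finally, your closing sentence asserts that the symmetric argument gives $Y\sim\mathcal K_\beta(b,a,c)$ ``with the same triple $(a,b,c)$''; this is not automatic. The argument applied to the pair $(Y,V)$ a priori produces a different triple $(\tilde a,\tilde b,\tilde c)$, and matching the parameters requires substituting all four identified transforms back into \eqref{KLeq} and exploiting the Kummer identity \eqref{AS:Kum} to deduce $\tilde c=c$, $\tilde a=b$, $\tilde b=a$. These are not routine technicalities to be deferred: the dichotomy/exclusion step, the separation-of-variables analysis, and the parameter matching together constitute the proof, and none of them is supplied or correctly anticipated by your outline.
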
  
\begin{remark}
	Recall that Theorem 2.6 of \cite{PW18} says that for random variables $X$ and $Y$ which are independent, positive, non-Dirac, independence  of $U$ and $V$ given by \eqref{def1} implies that $X\sim\mathcal K(a,b,c)$ and $Y\sim\mathrm{Gamma}(b,c)$. (Note the change of parametrization of the Kummer distribution: instead of $b-a$ in \cite{PW18} we write here just $b$.) In view of the first part of Remark \ref{exten} this result covers the case $(\alpha,\beta)=(1,0)$ in Theorem \ref{gHVcharacterization}. Due to the second part of Remark \ref{exten} and symmetry with respect to $\alpha$ and $\beta$ we conclude that the cases $\beta=0$, $\alpha>0$ and $\beta>0$, $\alpha=0$ also follow immediately from Theorem 2.6. of \cite{PW18}. That is, we need only to prove Theorem \ref{gHVcharacterization} for $\alpha>0$ and $\beta>0$. The proof of this case requires much more efforts and is given in several steps in the next section.
\end{remark}

\section{Proof of Theorem \ref{gHVcharacterization}  for $\alpha>0$ and $\beta>0$}
\subsection{Independence  through Kummer transforms}  
We first note that the assumed independence properties imply  equality \eqref{KLeq} for $(s,t,z)\in(0,\infty)\times\R\times(0,\infty)$. Differentiating \eqref{KLeq} with respect to $z$ and dividing side-wise resulting equality by \eqref{KLeq} we get
\begin{equation}\label{4st}
\tfrac{L_X^{(\alpha)}(s+1,t,z)}{L_X^{(\alpha)}(s,t,z)}+\tfrac{L_Y^{(\beta)}(t+1,s,z)}{L_Y^{(\beta)}(t,s,z)}=\tfrac{L_U^{(\alpha)}(t+1,s,z)}{L_U^{(\alpha)}(t,s,z)}+\tfrac{L_V^{(\beta)}(s+1,t,z)}{L_Y^{(\beta)}(s,t,z)}.
\end{equation}
Using identity \eqref{id1} we obtain
$$
\beta\,\tfrac{L_X^{(\alpha)}(s,t-1,z)}{L_X^{(\alpha)}(s,t,z)}+\alpha\,\tfrac{L_Y^{(\beta)}(t,s-1,z)}{L_Y^{(\beta)}(t,s,z)}=\beta\,\tfrac{L_U^{(\alpha)}(t,s-1,z)}{L_U^{(\alpha)}(t,s,z)}+\alpha\,\tfrac{L_V^{(\beta)}(s,t-1,z)}{L_Y^{(\beta)}(s,t,z)}.
$$
Changing in the above formula $s$ to $s+1$ and $t$ to $t+1$ we arrive at
\begin{equation}\label{4st'}
\beta\,\tfrac{L_X^{(\alpha)}(s+1,t,z)}{L_X^{(\alpha)}(s+1,t+1,z)}+\alpha\,\tfrac{L_Y^{(\beta)}(t+1,s,z)}{L_Y^{(\beta)}(t+1,s+1,z)}=\beta\,\tfrac{L_U^{(\alpha)}(t+1,s,z)}{L_U^{(\alpha)}(t+1,s+1,z)}+\alpha\,\tfrac{L_V^{(\beta)}(s+1,t,z)}{L_Y^{(\beta)}(s+1,t+1,z)}.
\end{equation}
Subtracting side-wise \eqref{4st} (multiplied by $\alpha\beta$) from \eqref{4st'}, in view of \eqref{id1}, we get
\begin{equation}\label{4M}
\beta\,M_X^{(\alpha)}(s,t,z)+\alpha\,M_Y^{(\beta)}(t,s,z)=\beta\,M_U^{(\alpha)}(t,s,z)+\alpha\,M_V^{(\beta)}(s,t,z),
\end{equation}
where 
$$
M_W^{(\gamma)}(s,t,z)=\tfrac{L_W^{(\gamma)}(s+1,t,z)L_W^{(\gamma)}(s,t+1,z)}{L_W^{(\gamma)}(s,t,z)L_W^{(\gamma)}(s+1,t+1,z)}.
$$
From now on we suppress superscripts in $L$ and $M$ functions, that is we write $M_X:=M_X^{(\alpha)}$, $L_X:=L_X^{(\alpha)}$, $M_Y:=M_Y^{(\beta)}$, $L_Y:=L_Y^{(\beta)}$, $M_U:=M_U^{(\alpha)}$, $L_U:=L_U^{(\alpha)}$ and  $M_V:=M_V^{(\beta)}$, $L_V:=L_V^{(\beta)}$.   Note  that \eqref{KLeq} implies
\begin{equation}\label{MM}
M_X(s,t,z)M_Y(t,s,z)=M_U(t,s,z)M_V(s,t,z).
\end{equation}

Combining \eqref{4M} with \eqref{MM} we get
\begin{align}
\left(\beta\,M_X(s,t,z)-\alpha\,M_V(s,t,z)\right)\left(M_X(s,t,z)-M_U(t,s,z)\right)&=0\label{double2}\\
\left(\beta\,M_U(t,s,z)-\alpha\,M_Y(t,s,z)\right)\left(M_V(s,t,z)-M_Y(t,s,z)\right)&=0. \label{double1}
\end{align}

Since $M_X,M_Y,M_U, M_V$ all extend uniquely to meromorphic functions in a common domain in $\C^3$ it follows from \eqref{double2} that either $\beta\,M_X\equiv\alpha\,M_V$ or $M_X\equiv M_U$ and  from \eqref{double1} that either  $\beta\,M_U\equiv\alpha\,M_Y$ or $M_V\equiv M_Y$. 

In Section \ref{MX_MV} we will prove that $\beta\,M_X\equiv\alpha\,M_V$ is impossible. It will follow by symmetry that also $\beta\,M_U\equiv\alpha\,M_Y$ is impossible.  
Then, in Section \ref{MXMU} we will consider the case  $M_X\equiv M_U$. It relates $L_X$ and $L_U$ through auxiliary functions $a,b,f$ which are deciphered in Section \ref{abf}. In Section \ref{ode} we eliminate $L_X$ from the problem. It leads to the Kummer differential equation for slightly modified version of $L_U$, which allows to identify $L_U$ (and thus also $L_X$), up to parameters. The case $M_V\equiv M_Y$ follow by the analogous approach. The final step in Section \ref{para} lies in identification of relations between parameters of all four transforms: $L_X$, $L_Y$, $L_U$ and $L_V$.

\subsection{The case $\beta M_X\equiv \alpha M_V$ is impossible}\label{MX_MV}
Assume 
\begin{equation}\label{MXMV}
\beta M_X(s,t,z)=\alpha M_V(s,t,z),\quad s,t\ge 0,\;z>0.
\end{equation}
Define
\begin{equation}\label{Astz}
 \widetilde A(s,t,z):=\tfrac{L_X(s+1,t)L_V(s,t)}{L_X(s,t)L_V(s+1,t)}
\end{equation}
and
\begin{equation}\label{Cstz}
\widetilde B(s,t,z):=\tfrac{L_X(s,t)L_V(s,t+1)}{L_X(s,t+1)L_V(s,t)},
\end{equation}
where we suppressed the argument $z$. Note that \eqref{MXMV} implies
$$
\widetilde A(s,t+1,z)=\tfrac{\beta}{\alpha}\,\widetilde A(s,t,z)\quad \mbox{and}\quad \widetilde B(s+1,t,z)=\tfrac{\alpha}{\beta}\,\widetilde B(s,t,z),\quad s,t\in\N.
$$
Consequently,
$$
\widetilde A(s,t,z)=\left(\tfrac{\beta}{\alpha}\right)^t\,\widetilde a(s,z),\quad\mbox{and}\quad \widetilde B(s,t,z)=\left(\tfrac{\alpha}{\beta}\right)^s\,\widetilde b(t,z),
$$
where $\widetilde a(s,z)=\widetilde A(s,0,z)$ and $\widetilde b(t,z)=\widetilde B(0,t,z)$.

Note  that \eqref{MXMV} implies also
\begin{equation}\label{ALLCLL}
\beta \widetilde A(s,t,z)\tfrac{L_X(s,t)}{L_X(s+1,t+1)}=\alpha \widetilde B(s,t,z)\tfrac{L_V(s,t)}{L_V(s+1,t+1)}.
\end{equation}
Consequently,
$$
h(s,t,z):=\tfrac{\widetilde a(s,z)}{\widetilde b(t,z)}=\left(\tfrac{\alpha}{\beta}\right)^{t+s+1}\tfrac{L_X(s+1,t+1)L_V(s,t)}{L_X(s,t)L_V(s+1,t+1)}.
$$
Then 
$$
\left(\tfrac{\beta}{\alpha}\right)^{s+t+1}\tfrac{\partial h(s,t,z)}{\partial z}\\
=\tfrac{\mathrm{Num}}{\left[L_X(s,t)L_V(s+1,t+1)\right]^2},$$
where the numerator
\begin{align*}
\mathrm{Num}=
&\left[L_X(s+2,t+1)L_V(s,t)+L_X(s+1,t+1)L_V(s+1,t)\right]L_X(s,t)L_V(s+1,t+1)\\
&-L_X(s+1,t+1)L_V(s,t)\left[L_X(s+1,t)L_V(s+1,t+1)+L_X(s,t)L_V(s+2,t+1)\right]\\
=&L_X(s,t)L_V(s,t)\left[L_X(s+2,t+1)L_V(s+1,t+1)-L_X(s+1,t+1)L_V(s+2,t+1)\right]\\
&+L_X(s,t)L_X(s+1,t+1)L_V(s+1,t)L_V(s+1,t+1)\\&-L_X(s+1,t)L_X(s+1,t+1)L_V(s,t)L_V(s+1,t+1)=I_1+I_2-I_3.
\end{align*}
Note that the last two summands in the above expression can be rewritten with the help of \eqref{id1} as follows:
$$
I_2=\tfrac{1}{\beta}L_X(s,t)L_X(s+1,t+1)L_V(s+1,t)\left[L_V(s,t)-L_V(s,t+1)\right]
$$
and
$$
I_3=\tfrac{1}{\alpha}L_X(s+1,t)\left[L_X(s,t)-L_X(s,t+1)\right]L_V(s,t)L_V(s+1,t+1).
$$
Consequently,
\begin{align*}
&I_1+I_2-I_3=L_X(s,t)L_V(s,t)\left\{\tfrac{1}{\alpha}L_V(s+1,t+1)\left[\alpha L_X(s+2,t+1)-L_X(s+1,t)\right]\right.\\&\left.
-\tfrac{1}{\beta}L_X(s+1,t+1)\left[\beta L_V(s+2,t+1)-L_V(s+1,t)\right]\right\}\\
&+\tfrac{1}{\alpha}L_X(s+1,t)L_X(s,t+1)L_V(s,t)L_V(s+1,t+1)\\
&-\tfrac{1}{\beta}L_X(s,t)L_X(s+1,t+1)L_V(s+1,t)L_V(s,t+1).
\end{align*}
Note that \eqref{MXMV} implies that the two last terms cancel. Therefore referring again to \eqref{id1} in the first two expressions above we get
$$
\mathrm{Num}=L_X(s,t)L_V(s,t)L_X(s+1,t+1)L_V(s+1,t+1)\,\tfrac{\alpha-\beta}{\alpha\beta}
$$
whence
$$
\tfrac{\partial\,h(s,t,z)}{\partial\,z}=h(s,t,z)\tfrac{\alpha-\beta}{\alpha\beta},
$$
i.e. 
$$
h(s,t,z)=\chi(s,t)\exp\left(\tfrac{\alpha-\beta}{\alpha\beta}\,z\right),
$$
where $\chi(s,t)$ does not depend on $z$. Consequently,
$$\tfrac{\widetilde  a(s,z)}{\widetilde b(t,z)}=\tfrac{\widetilde a(s)}{\widetilde b(t)}e^{z\tfrac{\alpha-\beta}{\alpha\beta}},$$
where $\widetilde a(s)=\widetilde a(s,0)$ and $\widetilde b(t)=\widetilde b(0,t)$. 

Therefore, referring to \eqref{ALLCLL}, for $s=t=0$ we obtain
$$
e^{-\left(\tfrac{1}{\beta}-\tfrac{1}{\alpha}\right)z}\int_{(0,\infty)^2}\,\tfrac{x}{1+\alpha x}\,e^{-z(x+v)}\P_X(dx)\P_V(dv)=\tfrac{\beta \widetilde a(0)}{\alpha \widetilde b(0)}\,\int_{(0,\infty)^2}\,\tfrac{v}{1+\beta v}\,e^{-z(x+v)}\,\P_X(dx)\P_V(dv)
$$
which, with $\eta= \tfrac{1}{\beta}-\tfrac{1}{\alpha}$,  can be written as
\begin{equation}\label{suppo}
\int_{\R \times (0,\infty)^2}\,\tfrac{x}{1+\alpha x}\,e^{-z(w+x+v)} \delta_{\eta}(dw)\P_X(dx)\P_V(dv)=\tfrac{\beta \widetilde a(0)}{\alpha \widetilde b(0)}\,\int_{(0,\infty)^2}\,\tfrac{v}{1+\beta v}\,e^{-z(x+v)}\,\P_X(dx)\P_V(dv).
\end{equation}
In view of  \eqref{suppo} we see that the support of the convolution $\delta_{\eta} * \P_X * \P_V$ (determined by the left-hand side of  \eqref{suppo})  coincides with the support of the convolution $\P_X * \P_V$ (determined by the right-hand side of  \eqref{suppo}).  Consequently, denoting the latter support by $\mathfrak W$ and letting $w_0=\inf\mathfrak W$, we get  
$\eta+w_0=w_0$, which is impossible since by assumption $\alpha\neq \beta$.

\subsection{The case of $M_X\equiv M_U$ and functions $a$, $b$ and $f$}\label{MXMU}
We consider the equation
\begin{equation}\label{X=U}
M_X(s,t,z)=M_U(t,s,z),\quad s,t\in\{0,1,\ldots\},\; z> 0.
\end{equation}
Denote 
\begin{equation}\label{Astz}
A(s,t,z):=\tfrac{L_X(s+1,t)L_U(t,s)}{L_X(s,t)L_U(t,s+1)},
\end{equation}
and
\begin{equation}\label{Btsz}
B(t,s,z):=\tfrac{L_U(t+1,s)L_X(s,t)}{L_U(t,s)L_X(s,t+1)},
\end{equation}
where we skipped the superscript $(\alpha)$ and the argument $z$ in $L_X$ and $L_U$. 

Note that \eqref{X=U} implies that for all $s,t\in\N=\{0,1,\ldots\}$ we have 
$$
A(s,t,z)=A(s,t+1,z)\quad \mbox{and}\quad B(t,s,z)=B(t,s+1,z).
$$
Consequently, for $(s,t)\in\N^2$ we have $A(s,t,z)=A(s,0,z)=:A(s,z)$ and $B(t,s,z)=B(t,0,z)=:B(t,z)$.
Now \eqref{X=U} can be written as
\begin{equation}\label{AB1}
A(s,z)\,\tfrac{L_X(s,t)}{L_X(s+1,t+1)}=B(t,z)\tfrac{L_U(t,s)}{L_U(t+1,s+1)}.
\end{equation}
Consider now the quotient
$$
\tfrac{A(s,z)}{B(t,z)}=\tfrac{L_X(s+1,t+1)L_U(t,s)}{L_X(s,t)L_U(t+1,s+1)}.
$$ 
Then the numerator $\mathrm{Num}$ of the derivative 
$$\tfrac{\partial\,\tfrac{A(s,z)}{B(t,z)}}{\partial z}$$
assumes the form
\begin{align*}
\mathrm{Num}=&[L_X(s+2,t+1)L_U(t,s)+L_X(s+1,t+1)L_U(t+1,s)]L_X(s,t)L_U(t+1,s+1)\\
-&L_X(s+1,t+1)L_U(t,s)[L_X(s+1,t)L_U(t+1,s+1)+L_X(s,t)L_U(t+2,s+1)]\\
=&L_X(s,t)L_U(t,s)[L_X(s+2,t+1)L_U(t+1,s+1)-L_X(s+1,t+1)L_U(t+2,s+1)]\\
+&L_X(s,t)L_X(s+1,t+1)L_U(t+1,s)\tfrac{1}{\alpha}[L_U(t,s)-L_U(t,s+1)]\\
-&L_X(s+1,t)\tfrac{1}{\alpha}[L_X(s,t)-L_X(s,t+1)]L_U(t,s)L_U(t+1,s+1),
\end{align*}
where we twice used \eqref{id1}. Referring again to \eqref{X=U}, after cancellation, we  get
\begin{align*}
\tfrac{\alpha\,\mathrm{Num}}{L_X(s,t)L_U(t,s)}=&L_U(t+1,s+1)\left[\alpha L_X(s+2,t+1)-L_X(s+1,t)\right]\\-&L_X(s+1,t+1)\left[\alpha L_U(t+2,s+1)-L_U(t+1,s)\right].
\end{align*}
Note that \eqref{id1} applied to the expressions in square brackets above gives $-L_X(s+1,t+1)$ for the first square bracket and $-L_U(t+1,s+1)$ for the second. Consequently, $\mathrm{Num}=0$ and thus $\tfrac{A(s,z)}{B(t,z)}=\tfrac{a(s)}{b(t)}$, where $a(s):=A(s,0)$ and $b(t):=B(t,0)$. Consequently, we have the representations:
\begin{equation}\label{AB}
A(s,z)=f(z)a(s)\quad\mbox{and}\quad B(t,z)=f(z)b(t),\quad z> 0,\;s,t\in\N,
\end{equation}
where $f=\tfrac{A(0,z)}{a(0)}=\tfrac{B(0,z)}{b(0)}$.

Note that \eqref{AB1} can be rewritten as
\begin{equation}\label{AB2}
a(s)\,\tfrac{L_X(s,t,z)}{L_X(s+1,t+1,z)}=b(t)\tfrac{L_U(t,s,z)}{L_U(t+1,s+1,z)}.
\end{equation}

\subsection{Computing $a$, $b$ and $f$}\label{abf}

Taking logarithms of \eqref{Astz} sidewise, using the first equality of \eqref{AB} and differentiating with respect to $z$, in view of \eqref{id2}, we obtain
$$
\tfrac{f'}{f}=-\tfrac{L_X(s+2,t)}{L_X(s+1,t)}+\tfrac{L_X(s+1,t)}{L_X(s,t)}-\tfrac{L_U(t+1,s)}{L_U(t,s)}+\tfrac{L_U(t+1,s+1)}{L_U(t,s+1)}.
$$
Note that 
$$
\tfrac{\partial^2}{\partial z^2}\log\,L_X(s,t)=\tfrac{L_X(s+2,t)}{L_X(s,t)}-\left(\tfrac{L_X(s+1,t)}{L_X(s,t)}\right)^2.
$$
Using the above formula, the identity \eqref{id2} and recalling the definition of $M_U$ we finally get
$$
\tfrac{f'}{f}\,\tfrac{\partial}{\partial z}\log\,L_X(s,t)=\tfrac{\partial^2}{\partial z^2}\log\,L_X(s,t)+\tfrac{L_X(s+1,t)L_U(t+1,s)}{L_X(s,t)L_U(t,s)}\,(1-M_U^{-1}(t,s,z)).
$$
Starting with \eqref{Btsz}, in a similar way, we obtain the analogue of the above
$$
\tfrac{f'}{f}\,\tfrac{\partial}{\partial z}\log\,L_U(t,s)=\tfrac{\partial^2}{\partial z^2}\log\,L_U(t,s)+\tfrac{L_U(t+1,s)L_X(s+1,t)}{L_U(t,s)L_X(s,t)}\,(1-M_X^{-1}(s,t,z)).
$$
Subtracting the last two equalities sidewise, in view of \eqref{X=U}, we obtain
$$
\tfrac{f'}{f}=\tfrac{g'}{g},\qquad \mbox{where}\quad g=\tfrac{\partial}{\partial z}\log\,\tfrac{L_U(t,s)}{L_X(s,t)}.
$$

Consequently,
\begin{equation}\label{Kst}
K(s,t)f=\tfrac{\partial}{\partial z}\log\,\tfrac{L_U(t,s)}{L_X(s,t)}=\tfrac{L_X(s+1,t)}{L_X(s,t)}-\tfrac{L_U(t+1,s)}{L_U(t,s)}
\end{equation}
for some function $K$ which does not depend on $z$. Referring to \eqref{AB} as well as to \eqref{Astz} and \eqref{Btsz} again we get
$$
K(s,t)=a(s)\tfrac{L_U(t,s+1)}{L_U(t,s)}-b(t)\tfrac{L_X(s,t+1)}{L_X(s,t)}.
$$
Now, \eqref{id1} applied to $L_U(t,s+1)$ and $L_X(s,t+1)$ gives
$$
K(s,t)=a(s)-b(t)-\alpha\left(a(s)\tfrac{L_U(t+1,s+1)}{L_U(t,s)}-b(t)\tfrac{L_X(s+1,t+1)}{L_X(s,t)}\right).
$$
Referring to \eqref{AB2} we see that the expression in parenthesis above is zero, whence $K(s,t)=a(s)-b(t)$.

Now we write \eqref{Kst} for $s+1$ and $t$, which gives
$$
K(s+1,t)f=\tfrac{\partial}{\partial z}\log\,\tfrac{L_U(t,s+1)}{L_X(s+1,t)}.
$$
Substracting sidewise this equality from \eqref{Kst} we get
$$
(a(s)-a(s+1))f=\tfrac{\partial}{\partial z}\log\,\tfrac{L_X(s+1,t)L_U(t,s)}{L_X(s,t)L_U(t,s+1)}=\tfrac{\partial}{\partial z}\log\,A(s,z)=\tfrac{d\,\log\,f}{d z}
$$
where the last equality follows from the first part of \eqref{AB}. 

Similarly, using \eqref{Kst} with $s$ and $t+1$ we get 
$$
(b(t)-b(t+1))f=\tfrac{\partial}{\partial z}\log\,B(t,z)=\tfrac{d\,\log\,f}{d z},
$$
where the last equality follows from the second part of \eqref{AB}.

Consequently $a(s)-a(s+1)=b(t)-b(t+1)=:-\kappa\in\R$ and $a(s)=\kappa s +\tilde{a}$ and $b(t)=\kappa t+\tilde{b}$ for some constants  $\tilde a,\,\tilde b\in\R$. Thus
$$
-\kappa f=\tfrac{d\,\log\,f}{d z}.
$$
whence 
\begin{equation}\label{fz}
	f(z)=(\kappa z+C)^{-1}, \quad\mbox{where }\;C\in\R\;\mbox{ is a constant}.
	\end{equation} 

In case $\kappa\neq 0$ we have
$$
A(s,z)=\tfrac{a+s}{c+z}\quad\mbox{and}\quad B(t,z)=\tfrac{b+t}{c+z},
$$
where $a:=\tilde{a}/\kappa$, $b:=\tilde{b}/\kappa$ and $c=C/\kappa$. Since $A(s,z)$ and $B(t,z)$ are strictly positive for all $s\ge 0$, $t\ge 0$ and $z>0$, we conclude that $a,b>0$ and $c\ge 0$.  Note that $\kappa=0$ implies $f\equiv const$, which in view of \eqref{AB} would imply $A(s,z)=const$ and $B(t,z)=const$
.
\subsection{The Kummer ode and  identification (up to parameters) of $L_X$, $L_U$, $L_Y$ and $L_V$}\label{ode}
Note that \eqref{Kst}, when $\kappa\neq 0$, in view of \eqref{fz}, can be rewritten as
$$
\tfrac{K(s,t)}{z+c}=\tfrac{\partial}{\partial z}\,\log\,\tfrac{L_U(t,s)}{L_X(s,t)},\quad z>0,
$$
where we changed $K/\kappa$ into $K$ and $c=C/\kappa$. (Note that with such a new $K(s,t)$ we also have $a(s)=s+a$ and $b(t)=t+b$.) Consequently,
$$
(z+c)^{K(s,t)}=c(s,t)\tfrac{L_U(t,s)}{L_X(s,t)},
$$
where $c(s,t)$ does not depend on $z$. 

Rewrite the above as 
\begin{equation}\label{tozst}
(z+c)^{K(s,t)}L_X(s,t)=c(s,t)L_U(t,s)
\end{equation}
and for $s+1$ and $t$ as 
\begin{equation}\label{tozst1}
(z+c)^{K(s+1,t)}L_X(s+1,t)=c(s+1,t)L_U(t,s+1).
\end{equation}
Since $K(s+1,t)-K(s,t)=1$, dividing \eqref{tozst1} by \eqref{tozst} and referring to \eqref{Astz} we get
\begin{equation}\label{cst}
\tfrac{c(s+1,t)}{c(s,t)}=\tfrac{L_X(s+1,t)L_U(t,s)}{L_X(s,t)L_U(t,s+1)}(z+c)=a(s).
\end{equation}

Now we differentiate \eqref{tozst} with respect to $z$ and get
\begin{equation}\label{after}
K(s,t)(z+c)^{K(s,t)-1}L_X(s,t)-(z+c)^{K(s,t)}L_X(s+1,t)=-c(s,t)L_U(t+1,s).
\end{equation}
For $L_X(s,t)$ and $L_X(s+1,t)$ in \eqref{after} insert the relevant expression from \eqref{tozst} and \eqref{tozst1}, respectively. Together with \eqref{cst} it yields
$$
a(s)L_U(t,s+1)-K(s,t)L_U(t,s)=(c+z)L_U(t+1,s).
$$
Regarding $L_U(t,s)$ and $L_U(t+1,s)$ as the right-hand side of identity \eqref{id1} with suitable $s$ and $t$ we get
$$
\alpha(c+z)L_U(t+2,s+1)+(c+z+\alpha K(s,t))L_U(t+1,s+1)-b(t)L_U(t,s+1)=0.
$$
In view of \eqref{id2}, the above equation transforms into the second order diferential equation for the function $h:=L_U(t,s+1)$ as follows
$$
\alpha(c+z)\,h''(z)+\alpha (b(t)-a(s))-(c+z))\,h'(z)-b(t)\,h(z)=0.
$$
Consequently, for $g$ defined by $g(z)=h(\alpha z-c)$ we get the Kummer equation
\begin{equation}\label{Kum_eq}
zg''(z)+(b(t)-a(s)-z)g'(z)-b(t)g(z)=0.
\end{equation} 
It is well known that the general solution is of the form
$$
g(z)=c_1M(b(t),b(t)-a(s),z)+c_2U(b(t),b(t)-a(s),z),
$$
see 13.1.1, 13.1.2 and 13.1.3 in \cite{AbraSt}). Recall that $M(a,b,z)$ is unbounded when $z\to\infty$ (see e.g. 13.1.4 in \cite{AbraSt}) and $U(a,b,z)\to 0$ as $z\to \infty$. Since $g$, as a Laplace transform of a probability measure, is bounded, we necessarily have
$$
g(z)=c_U(s,t)U(b(t),b(t)-a(s),z).
$$
Returning to $L_U(t,s)$ (recall that $g$ was defined through $L_U(t,s+1)$) we get
$$
L_U(t,s,z)=c_U(s,t)U(b+t,b+t-a-s+1,\tfrac{c+z}{\alpha}),
$$
with $a,b>0$ and $c\ge 0$.

Changing the roles of $L_X$ and $L_U$ in the above argument starting with \eqref{tozst} we obtain
$$
L_X(s,t,z)=c_X(s,t)U(a+s,a+s-b-t+1,\tfrac{c+z}{\alpha}).
$$

Assume that $c=0$. Recalling \eqref{Kufu} we see that: (1)  if $a\neq b$ then either $U(a,a-b+1,0)=\infty$ or $U(b,b-a+1,0)=\infty$; (2) if $a=b$ then $U(a,1,0)=U(b,1,0)=\infty$. Since $L_X(0,0,0)=L_U(0,0,0)=1$ we obtain thus a contradiction. Therefore $c>0$ and Proposition \ref{unique} implies that $X\sim \mathcal K_\alpha(a,b,c)$ and $U\sim\mathcal K_{\alpha}(b,a,c)$. 

In case $\kappa=0$ we have $f(z)=C\neq 0$ and $A(s,z)=a>0$ and $B(t,z)=b>0$ where $a=\tilde{a}{C}$ and $b=\tilde b C$. We now show that this is impossible. 
Indeed, \eqref{Kst} then yields
$$
a-b=\tfrac{L_X(s+1,t)}{L_X(s,t)}-\tfrac{L_U(t+1,s)}{L_U(t,s)}.
$$
Combining this with \eqref{Btsz} we get
$$
(a-b)L_X(s,t)=L_X(s+1,t)-bL_X(s,t+1).
$$
For $s=t=0$ we thus get
$$
\E((X-a+b)e^{-zX}=b\E\,\tfrac{1}{1+\alpha X}\,e^{-zX}.
$$
Consequently, $(x-a+b)\P_X(dx)=\tfrac{b}{1+\alpha x}\,\P_X(dx)$. Equivalently,
$$
\tfrac{(1+\alpha x)(x-a+b)}{b}\,\P_X(dx)=\P_X(dx).
$$
Since $(1+\alpha x)(x-a+b)=b$ is equivalent to $\alpha x^2+(\alpha(b-a)+1)x-a=0$, the roots of which have different signs. Since $X$ is nonnegative this would imply that its support degenerates to a point, which contradicts our assumptions.

\subsection{Identifying the parameters}\label{para}
We have proved that $X\sim \mathcal K_\alpha(a,b,c)$, $U\sim\mathcal K_{\alpha}(b,a,c)$, $Y\sim \mathcal K_\beta(\tilde{a},\tilde{b},\tilde{c})$ and $V\sim\mathcal K_{\beta}(\tilde{b},\tilde{a},\tilde{c})$ for some $a, b, c, \tilde{a},\tilde{b},\tilde{c} >0$. 
Using \eqref{KLTK} for each of the variables $X,Y,U,V$, equation  \eqref{KLeq} reads:
$$\tfrac{\Gamma(a+s)U\left(a+s,a+s-b-t+1,\tfrac{c+z}{\alpha}\right)}{\alpha^s\Gamma(a)U\left(a,a-b+1,\tfrac{c}{\alpha}\right)} \times
\tfrac{\Gamma(\tilde{a}+t)U\left(\tilde{a}+t,\tilde{a}+t-\tilde{b}-s+1,\tfrac{\tilde{c}+z}{\beta}\right)}{\beta^t\Gamma(\tilde{a})U\left(\tilde{a},\tilde{a}-\tilde{b}+1,\tfrac{\tilde{c}}{\beta}\right)}  $$
$$=
\tfrac{\Gamma(b+t)U\left(b+t,b+t-a-s+1,\tfrac{c+z}{\alpha}\right)}{\alpha^t\Gamma(b)U\left(b,b-a+1,\tfrac{c}{\alpha}\right)} \times
\tfrac{\Gamma(\tilde{b}+s)U\left(\tilde{b}+s,\tilde{b}+s-\tilde{a}-t+1,\tfrac{\tilde{c}+z}{\beta}\right)}{\beta^s\Gamma(\tilde{b})U\left(\tilde{b},\tilde{b}-\tilde{a}+1,\tfrac{\tilde{c}}{\beta}\right)}
 $$
which, by applying identity \eqref{AS:Kum} to the left-hand side, gives
$$  \tfrac{\Gamma(a+s) \left(\tfrac{c+z}{\alpha}\right)^{b+t-a-s} U\left(b+t,b+t-a-s+1,\tfrac{c+z}{\alpha}\right)}{\alpha^s\Gamma(a) \left(\tfrac{c}{\alpha}\right)^{b-a}U\left(b,b-a+1,\tfrac{c}{\alpha}\right)} \times
\tfrac{\Gamma(\tilde{a}+t)  \left(\tfrac{\tilde{c}+z}{\beta}\right)^{\tilde{b}+s-\tilde{a}-t} U\left(\tilde{b}+s,\tilde{b}+s-\tilde{a}-t+1,\tfrac{\tilde{c}+z}{\beta}\right)}{\beta^t\Gamma(\tilde{a})  \left(\tfrac{\tilde{c}}{\beta}\right)^{\tilde{b}-\tilde{a}}U\left(\tilde{b},\tilde{b}-\tilde{a}+1,\tfrac{\tilde{c}}{\beta}\right)}  $$
$$=
\tfrac{\Gamma(b+t)U\left(b+t,b+t-a-s+1,\tfrac{c+z}{\alpha}\right)}{\alpha^t\Gamma(b)U\left(b,b-a+1,\tfrac{c}{\alpha}\right)} \times
\tfrac{\Gamma(\tilde{b}+s)U\left(\tilde{b}+s,\tilde{b}+s-\tilde{a}-t+1,\tfrac{\tilde{c}+z}{\beta}\right)}{\beta^s\Gamma(\tilde{b})U\left(\tilde{b},\tilde{b}-\tilde{a}+1,\tfrac{\tilde{c}}{\beta}\right)}.
 $$
After cancellations we obtain
 \begin{equation} \label{ident} \tfrac{   (c+z)^{b+t-a-s}  (\tilde{c}+z)^{\tilde{b}+s-\tilde{a}-t} }
{ c^{b-a}  {\tilde{c}}^{\tilde{b}-\tilde{a} } } \times \tfrac{\Gamma(a+s)}{\Gamma(a)} \times  \tfrac{\Gamma(\tilde{a}+t)}{\Gamma(\tilde{a})} = 
\tfrac{\Gamma(b+t)}{\Gamma(b)} \times  \tfrac{\Gamma(\tilde{b}+s)}{\Gamma(\tilde{b})}. \end{equation} 
Taking the logarithm and differentiating in $z$ gives
$$ \tfrac{b+t-a-s}{c+z} +\tfrac {\tilde{b}+s-\tilde{a}-t}{\tilde{c}+z} =0.$$ 
Since this holds for any $z>0$, we conclude that $c=\tilde{c}$ and $b-a=\tilde{a}-\tilde{b}$.
Returning to \eqref{ident} we have
$$\tfrac{\Gamma(a+s)}{\Gamma(a)} \times  \tfrac{\Gamma(\tilde{a}+t)}{\Gamma(\tilde{a})} = 
\tfrac{\Gamma(b+t)}{\Gamma(b)} \times  \tfrac{\Gamma(\tilde{b}+s)}{\Gamma(\tilde{b})},$$
which for $(s,t)=(0,1)$, yields $\tilde{a} = b$ and for $(s,t)=(1,0)$ yields $a=\tilde b$.

{\bf Acknowledgement:} J. Weso\l owski was supported by grant Beyond POB II no. 1820/366/Z01/2021 within the Excellence Initiative: Research University (IDUB) programme of the Warsaw Univ. of Technology, Poland.


\end{document}